\tikzstyle{none}=[inner sep=0mm] 
\tikzstyle{new style 0}=[fill=white, draw=black, shape=circle]
\tikzstyle{new edge style 0}=[->, draw=red, line width=0.7mm]
\tikzstyle{new edge style 1}=[->, draw=blue, line width=0.7mm]
\tikzstyle{new edge style 2}=[->, draw=red, thick, dashed, line width=0.7mm]
\tikzstyle{new edge style 3}=[draw=blue, thick, dashed, ->, line width=0.7mm]
\tikzstyle{new edge style 4}=[->, line width=0.3mm]
\newtheorem{thm}{Theorem}[section]
\newtheorem{cor}[thm]{Corollary}
\newtheorem{lem}[thm]{Lemma}
\newtheorem{prop}[thm]{Proposition}
\theoremstyle{definition}
\newtheorem{defn}[thm]{Definition}
\newtheorem{exam}[thm]{Example}
\newtheorem*{mtm}{Main Theorem}
\theoremstyle{remark}
\newtheorem{rem}[thm]{Remark}
\numberwithin{equation}{section}
\newcommand{\Z}{\mathbb Z}
\newcommand{\To}{\longrightarrow}
\newcommand{\var}[1]{\textbf{\textsc{#1}}}
\newcommand{\PP}{\mathbb{P}}
\def\a{\alpha}
\begin{document}

\title[Supersolvable closures are finitely generated]{Supersolvable closures of finitely generated subgroups of the free group}

\author{Lida Chen}
\address{Department of Mathematics, Soochow University, Suzhou 215006, CHINA}
\email{cllddd@126.com}

\author{Jianchun Wu}
\address{Department of Mathematics, Soochow University, Suzhou 215006, CHINA}
\email{wujianchun@suda.edu.cn}

\thanks{}

\subjclass[2010]{20E05, 20F16, 20F10}

\keywords{supersolvable, free groups, variety. \\ $*$The second author is the corresponding author.}

\begin{abstract}
We prove the pro-supersolvable closure of a finitely generated subgroup of the free group is finitely generated. It extends similar results for pro-$p$ closures proved by Ribes-Zalesskii and pro-Nilpotent closures proved by Margolis-Sapir-Weil.
\end{abstract}

\maketitle

\section{Introduction}
The pro-$\var{V}$ topology on a group is defined by Hall in \cite{Hall49, Hall50}, where $\var{V}$ is a  \emph{variety}, namely a class of finite groups closed with respect to subgroups, homomorphic images and  direct products. Such a class is usually called a \emph{pseudovariety}, see \cite{MSW01, RZ94, Weil98}.

The closure $Cl_{\var{V}}(S)$ of a subset $S$ in a finitely generated free group $F$ endowed with the pro-$\var{V}$ topology consists of those elements not able to be distinguished  with $S$ by any homomorphism from $F$ to the groups in $\var{V}$. 
For instance, let $\var{V}$ be the variety $\var{Ab}$ of all finite abelian groups, the closure of the trivial subgroup in $F$ is the derived subgroup $[F,F]$.

Hall proved that every finitely generated subgroup is closed in $F$ when $\var{V}$ is the variety of all finite groups. This is not true for the pro-$\var{Ab}$ topology---the trivial subgroup is not closed and its closure $[F,F]$ is not even finitely generated. It is natural to raise a question that given a variety $\var{V}$ and a finitely generated subgroup $H$, how to determine the pro-$\var{V}$ closure of $H$.

When $\var{V}$ is the variety $\var{G}_p$ ($p$ is prime) of all finite $p$-groups, Ribes-Zalesskii \cite{RZ94} proved that the pro-$\var{G}_p$ closure of a finitely generated subgroup is finitely generated and gave an algorithm to compute the closure. Margolis-Sapir-Weil modified Ribes-Zalesskii's algorithm in \cite{MSW01} to compute the pro-$\var{G}_p$ closure in polynomial time. They also proved when $F$ is endowed with pro-nilpotent topology, namely when $\var{V}$ is the variety $\var{Nil}$ of all finite nilpotent groups, the pro-$\var{Nil}$ closure of a finitely generated subgroup is finitely generated and computable.

The main tool used in \cite{MSW01, RZ94} is the bijective correspondence between finitely generated subgroups and certain graphs acted by groups which are called reduced inverse automata in \cite{MSW01} and  reduced $A$-labeled graphs in this paper where $A$ is a basis of $F$. The idea behind this correspondence goes back to Serre \cite{Ser77} and Stallings \cite{Sta83}, that has turned out to be extremely useful since then. This correspondence allows one to define the notion ``overgroups'' of a finitely generated subgroup. An overgroup of a finitely  generated subgroup $H$ is always finitely generated, but whether a subgroup is an overgroup of $H$ depends on the chosen basis of $F$. When a basis is fixed, there are only finitely many overgroups of $H$. In \cite{MSW01}, Margolis-Sapir-Weil proved the pro-$\var{Nil}$ closure of $H$ is the intersection of all pro-$\var{G}_p$ closures of $H$ and each pro-$\var{G}_p$ closure is an overgroup of $H$ for a fixed basis $A$. So the intersection involving finitely many finitely generated subgroups is finitely generated.
We will describe this tool in Section \ref{sp3} and refer to \cite{KM02, MVW01} where relevant statements and proofs can be found. For instance, a simple proof of Hall's extension theorem  in \cite{Hall50}, which states every finitely generated subgroup is a free factor of a subgroup of finite index in the free group can be found in \cite{KM02} Theorem 8.1.

Motivated by Margolis-Sapir-Weil's work, we focus on the case that $\var{V}$ is the variety $\var{Su}$ of all finite supersolvable groups. It is a variety lies between $\var{Nil}$ and the variety $\var{Sol}$ of all finite solvable groups.
We prove
\begin{mtm}
Let $H$ be a finitely generated subgroup of $F$ endowed with the pro-$\var{Su}$ topology, then the closure of $H$ is finitely generated.
\end{mtm}

The proof depends on the following observation.  A finite nilpotent group is a direct product of its Sylow subgroups and so can project onto a group in the subvariety $\var{G}_p$ of $\var{Nil}$ for each prime $p$ dividing its order. This allows the authors of \cite{MSW01} to prove the pro-$\var{Nil}$ closure is the intersection of all pro-$\var{G}_p$ closures. A similar result holds for a finite supersolvable group, that is Lemma \ref{fp} which states that a finite supersolvable group can project onto a group in the subvariety $\var{H}_p$ (defined in Section \ref{sp}) of $\var{Su}$ for each prime $p$ dividing its order. It is the key to prove the pro-$\var{Su}$ closure of $H$ is the intersection of all pro-$\var{H}_p$ closures. Then we turn to prove each pro-$\var{H}_p$ closure is an overgroup of $H$ for a fixed basis, that is Corollary \ref{hpclo}, and finally prove the main theorem by the fact that there are only finitely many overgroups of $H$.

The organization of this paper is as follows: The definitions of some varieties with their notations
are given in Section \ref{sp1}; some easy but frequently used results about the pro-$\var{V}$ topology on a free group are listed in Section \ref{sp2}; in Section \ref{sp3} and \ref{sp4}, we give a brief introduction about the tool mentioned above and some important lemmas and  propositions  which are used in this paper; in Section \ref{t1} we show that the pro-$\var{H}_p$ closure of $H$ is an overgroup of $H$ (Corollary \ref{hpclo}) and provide a simple method to decide whether the pro-$\var{H}_p$ closure of a finitely generated subgroup is $F$ (Theorem \ref{hpdense}); in Section \ref{t2}, after introducing some notions and propositions about finite supersolvable groups, we prove the main theorem of this paper.

\section{Preliminary}\label{sp}

\subsection{Varieties of finite groups}\label{sp1}
\begin{defn}
A class of finite groups is called a \emph{variety} if it is closed with respect to subgroups, homomorphic images and direct products. The \emph{product} $\var{U}*\var{V}$ of two varieties $\var{U}$, $\var{V}$ consists of all groups $G$ having a normal subgroup $N\in \var{U}$ such that $G/N\in \var{V}$. For two varieties $\var{U}, \var{V}$, we say $\var{U}$ is a \emph{subvariety} of $\var{V}$ if $\var{U}\subset \var{V}$.
\end{defn}





\begin{defn}
A group $G$ is called a \emph{supersolvable} group if there exists a normal series:
$$1=H_0\leq H_1\leq \cdots\leq H_n=G$$ where each $H_i\lhd G$ and  each $H_{i+1}/H_i$ is cyclic.
\end{defn}

%
%
%


The class of all finite supersolvable groups is a variety denoted by $\var{Su}$. We use the following notations for the varieties mentioned in this paper.

\vspace{2mm}

\begin{tabular}{|c|c|}
  \hline
  Notation & The corresponding variety\\
  \hline

  $\var{G}_p$ & the variety of all $p$-groups ($p$ is prime)\\

  $\var{Ab}$ & the variety of all abelian groups \\
  $\var{Ab}_d$ & the variety of all abelian groups of  exponent  dividing $d$ \\
  $\var{Nil}$ & the variety of all nilpotent groups \\

  $\var{Su}$ & the variety of all supersolvable groups \\
  $\var{Sol}$ & the variety of all solvable groups \\

  $\var{H}_p$ & $\var{G}_p*\var{Ab}_{p-1}$ ($p$ is prime)\\
  \hline
\end{tabular}

\vspace{2mm}

\begin{rem}
It is easy to check $\var{G}_p\subset \var{Nil}\subset \var{Su} \subset \var{Sol}$. $\var{H}_p$ is also a subvariety of $\var{Su}$.
\end{rem}

%
%
%
%
%

\subsection{The pro-$\var{V}$ topology on a free group}\label{sp2}
In \cite{Hall50},  Hall introduced a topology for  free groups which is called the pro-$\var{V}$ topology nowadays.

%
%
%


Let $F$ be a free group and  $\var{V}$ be a variety. Denote the family of normal subgroups $K$ of $F$ such that $F/K\in \var{V}$ by $\mathcal{K}$. We can define a topology on $F$ by taking $\{Kx~|~K\in \mathcal{K}, ~x\in F\}$ as a basis for open sets in $F$. Note that $\mathcal{K}$ is a basis of neighborhoods of the element $1$. This topology is called the pro-$\var{V}$ topology on $F$ which can also be defined for an arbitrary group.

\begin{rem}
The pro-$\var{V}$ topology on $F$ is the coarsest topology which makes all homomorphisms from $F$ to elements (equipped with the discrete topology) of $\var{V}$ continuous.
\end{rem}

In \cite{MSW01}, the authors gave another point view for the pro-$\var{V}$ topology. We describe it below.

 For $x, y\in F$, we say that a finite group $V$ separates $x$ and $y$ if there exists a homomorphism $\phi_V: F\to V$ such that $\phi_V(x)\neq \phi_V(y)$. Let $$d(x,y)=\begin{cases}
          e^{-r(x,y)}, & \mbox{if }x,y \mbox{ can be separated by an element in }$\var{V}$ \\
          0, & \mbox{otherwise}
        \end{cases}$$
where $r(x,y)=\min\{|V|: V\in \var{V} ~\text{separtes}~ x ~\text{and}~ y\}$.

One can verified that $d$ is a pseudometric on $F$. The topology defined by $d$ is the same as the pro-$\var{V}$ topology.

\begin{defn}
Let $\var{V}$ be a variety. A subgroup $H$ of $F$ is said to be \emph{$\var{V}$-open} (resp. \emph{$\var{V}$-closed}) if it is open (resp. closed) in $F$ endowed with the pro-$\var{V}$ topology. The closure of a subset $S\subset F$ 
 is called the $\var{V}$-closure of $S$ and denoted by $Cl_{\var{V}}(S)$. Moreover $S$  is called $\var{V}$-dense in $F$ if $Cl_{\var{V}}(S)=F$.
\end{defn}

%
%
%

\begin{prop}[\cite{MSW01}, Proposition 1.3]
Let $H$ be a subgroup of $F$, then$$Cl_{\var{V}}(H)=\bigcap_{K \in \mathcal{K}} K=\bigcap_{\phi\in \Phi_\var{V}} \phi^{-1}(\phi(H)).$$
where $\mathcal{K}$ is the set of all $\var{V}$-open subgroups that containing $H$, and $\Phi_\var{V}$ is the set of all homomorphisms from $F$ to elements of $\var{V}$.
\end{prop}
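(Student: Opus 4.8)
The plan is to prove the two claimed equalities separately, each by a pair of inclusions, relying throughout on two elementary facts: (i) in any topological group an open subgroup is automatically closed, since its complement is a union of cosets, each of which is open; and (ii) a subgroup $K$ of $F$ is $\var{V}$-open if and only if it contains some normal subgroup $N$ with $F/N\in\var{V}$, because the cosets of such $N$ form the declared basis of the topology and $\{N\}$ is a neighborhood basis at $1$.

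First I would establish $Cl_{\var{V}}(H)=\bigcap_{K\in\mathcal{K}}K$. For the inclusion $\subseteq$, each $K\in\mathcal{K}$ is a $\var{V}$-open subgroup containing $H$; being an open subgroup it is also closed by (i), so it is a closed set containing $H$, whence $Cl_{\var{V}}(H)\subseteq K$, and intersecting over all such $K$ gives the inclusion. For the reverse inclusion I would argue contrapositively: if $x\notin Cl_{\var{V}}(H)$ then some basic neighborhood $Nx$ of $x$, with $N\lhd F$ and $F/N\in\var{V}$, misses $H$, which is equivalent to $\phi(x)\notin\phi(H)$ for the quotient map $\phi\colon F\to F/N$. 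Then $K:=\phi^{-1}(\phi(H))=NH$ is a subgroup containing both $N$ and $H$, hence $\var{V}$-open by (ii) and a member of $\mathcal{K}$, yet $x\notin K$; so $x\notin\bigcap_{K}K$.

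Next I would prove $\bigcap_{K\in\mathcal{K}}K=\bigcap_{\phi\in\Phi_{\var{V}}}\phi^{-1}(\phi(H))$. For the inclusion $\subseteq$, I would observe that every $\phi\in\Phi_{\var{V}}$, say $\phi\colon F\to V$ with $V\in\var{V}$, produces a subgroup $\phi^{-1}(\phi(H))\supseteq H$ which contains $\ker\phi$; since $F/\ker\phi$ embeds in $V$ it lies in $\var{V}$ by closure under subgroups, so $\phi^{-1}(\phi(H))$ is $\var{V}$-open and is therefore one of the $K\in\mathcal{K}$, forcing $\bigcap_{K}K\subseteq\phi^{-1}(\phi(H))$ for every $\phi$. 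For the opposite inclusion, given any $K\in\mathcal{K}$, openness of $K$ at the point $1\in H\subseteq K$ yields by (ii) a normal $N\subseteq K$ with $F/N\in\var{V}$; taking $\phi$ to be the quotient $F\to F/N$ gives $\phi^{-1}(\phi(H))=NH\subseteq K$, so the intersection over all $\phi$ sits inside each $K$.

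There is no genuine obstacle here, as this is a foundational statement; the only point demanding care is the repeated translation between the three descriptions of the same object—topological closure, $\var{V}$-open subgroups containing $H$, and full preimages $\phi^{-1}(\phi(H))$ under homomorphisms onto $\var{V}$-groups—and in particular the identity $\phi^{-1}(\phi(H))=(\ker\phi)H$ together with the use of closure of $\var{V}$ under subgroups to guarantee $F/\ker\phi\in\var{V}$.
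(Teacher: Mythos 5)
Your proof is correct and is the standard argument; the paper itself states this result without proof, citing \cite{MSW01}, and your two-step verification (open subgroups are closed, and the identity $\phi^{-1}(\phi(H))=(\ker\phi)H$ with $F/\ker\phi\in\var{V}$ by closure under subgroups) is exactly the expected justification. No issues.
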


Note that for each  $\phi\in \Phi_\var{V}$, $\phi(F)$ lies in $\var{V}$, then $\phi$ induces a surjective homomorphism $\phi_s: F\to \phi(F)\in \var{V}$ such that $\phi_s^{-1}(\phi_s(H))=\phi^{-1}(\phi(F))$. We have
\begin{cor}\label{dense}
Let $H$ be a subgroup of $F$, then $H$ is $\var{V}$-dense in $F$ if and only if for each surjective $\phi_s\in \Phi_\var{V}$, $\phi_s^{-1}(\phi_s(H))=F$, namely $\phi_s(H)=\phi_s(F)$.
\end{cor}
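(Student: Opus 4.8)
The plan is to show that Corollary \ref{dense} follows almost directly from the preceding Proposition by specializing to the case $H$ is $\var{V}$-dense and then reducing arbitrary homomorphisms to surjective ones. Recall the Proposition gives
\[
Cl_{\var{V}}(H)=\bigcap_{\phi\in \Phi_\var{V}} \phi^{-1}(\phi(H)),
\]
so $H$ is $\var{V}$-dense, i.e. $Cl_{\var{V}}(H)=F$, precisely when this intersection equals all of $F$. Since each $\phi^{-1}(\phi(H))$ is a subgroup of $F$ containing $H$, the intersection equals $F$ if and only if each individual factor equals $F$; thus $H$ is $\var{V}$-dense iff $\phi^{-1}(\phi(H))=F$ for every $\phi\in\Phi_\var{V}$.

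First I would make precise the remark in the text just before the statement: for an arbitrary $\phi\in\Phi_\var{V}$ with image $\phi(F)$, the map $\phi$ factors as the corestriction $\phi_s\colon F\to\phi(F)$ followed by the inclusion $\phi(F)\hookrightarrow V$. Since $\phi(F)\in\var{V}$ (because $\var{V}$ is closed under subgroups), $\phi_s$ is itself a surjective member of $\Phi_\var{V}$. The key identity to verify is $\phi^{-1}(\phi(H))=\phi_s^{-1}(\phi_s(H))$: this holds because $\phi$ and $\phi_s$ agree as set maps into $\phi(F)$, so the preimage of $\phi(H)=\phi_s(H)$ is the same subgroup of $F$ in either description. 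Consequently the full collection $\{\phi^{-1}(\phi(H)):\phi\in\Phi_\var{V}\}$ coincides with the subcollection indexed only by the surjective homomorphisms, and the density condition may be checked using surjective $\phi_s$ alone.

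Combining these two observations, I would conclude that $H$ is $\var{V}$-dense in $F$ if and only if $\phi_s^{-1}(\phi_s(H))=F$ for every surjective $\phi_s\in\Phi_\var{V}$. Finally, I would record the elementary reformulation of this condition: since $\phi_s$ is surjective, $\phi_s^{-1}(\phi_s(H))=F$ is equivalent to $\phi_s(H)=\phi_s(F)$, because applying $\phi_s$ to both sides and using surjectivity gives $\phi_s(H)=\phi_s(F)$, while conversely $\phi_s(H)=\phi_s(F)=\phi_s(F)$ forces every element of $F$ into $\phi_s^{-1}(\phi_s(H))$. This yields the stated equivalence.

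There is no serious obstacle here; the content is entirely bookkeeping. The only point requiring genuine care is the factorization step, where one must confirm that corestricting to the image lands in $\var{V}$ (using closure under subgroups) and that the preimage subgroup is genuinely unchanged under the factorization. Once that is checked, reducing from all homomorphisms to surjective ones and rewriting $\phi_s^{-1}(\phi_s(H))=F$ as $\phi_s(H)=\phi_s(F)$ are both immediate.
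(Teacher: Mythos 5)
Your proposal is correct and follows essentially the same route as the paper, which likewise derives the corollary from the displayed formula $Cl_{\var{V}}(H)=\bigcap_{\phi\in \Phi_\var{V}} \phi^{-1}(\phi(H))$ together with the observation that any $\phi\in\Phi_{\var{V}}$ factors through the surjection $\phi_s\colon F\to\phi(F)\in\var{V}$ with $\phi_s^{-1}(\phi_s(H))=\phi^{-1}(\phi(H))$. The only extra step you spell out, correctly, is that the intersection equals $F$ iff every factor does.
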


\begin{lem}[\cite{MSW01}, Corollary 3.1]\label{subd}
Suppose $\var{U},\var{V}$ are two varieties such that $\var{U}\subset \var{V}$. Let $H$ be a finitely generated subgroup of $F$.

(1) If $H$ is $\var{V}$-dense, then $H$ is $\var{U}$-dense.

(2) The $\var{V}$-closure of $H$ is contained in the $\var{U}$-closure of $H$.

(3) If $H$ is $\var{U}$-closed, then $H$ is $\var{V}$-closed.
\end{lem}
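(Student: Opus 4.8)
The plan is to reduce all three statements to the single observation that the inclusion $\var{U}\subseteq\var{V}$ of varieties induces an inclusion $\Phi_\var{U}\subseteq\Phi_\var{V}$ of the associated families of test homomorphisms. Indeed, if $\phi\colon F\to W$ is a homomorphism with $W\in\var{U}$, then $W\in\var{V}$ as well, so $\phi\in\Phi_\var{V}$; thus every $\var{U}$-homomorphism is automatically a $\var{V}$-homomorphism. This is the only nontrivial input.

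The central step is part (2). I would apply the second description of the closure from Proposition 1.3, namely $Cl_\var{V}(H)=\bigcap_{\phi\in\Phi_\var{V}}\phi^{-1}(\phi(H))$, together with the analogous formula for $\var{U}$. Since the index set $\Phi_\var{U}$ is contained in $\Phi_\var{V}$, the intersection defining $Cl_\var{V}(H)$ ranges over a larger collection of subgroups $\phi^{-1}(\phi(H))$ and is therefore contained in the intersection defining $Cl_\var{U}(H)$; this yields $Cl_\var{V}(H)\subseteq Cl_\var{U}(H)$. The only point requiring attention is the contravariance: enlarging the indexing family shrinks the intersection.

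Parts (1) and (3) then follow formally. For (1), if $H$ is $\var{V}$-dense, i.e.\ $Cl_\var{V}(H)=F$, then by (2) we have $F=Cl_\var{V}(H)\subseteq Cl_\var{U}(H)\subseteq F$, forcing $Cl_\var{U}(H)=F$, so $H$ is $\var{U}$-dense. Alternatively this is immediate from Corollary \ref{dense}: the density criterion $\phi_s(H)=\phi_s(F)$, demanded there for all surjective $\phi_s\in\Phi_\var{V}$, a fortiori holds for all surjective $\phi_s\in\Phi_\var{U}$. For (3), if $H$ is $\var{U}$-closed, i.e.\ $Cl_\var{U}(H)=H$, then combining the general containment $H\subseteq Cl_\var{V}(H)$ with (2) gives $H\subseteq Cl_\var{V}(H)\subseteq Cl_\var{U}(H)=H$, whence $Cl_\var{V}(H)=H$ and $H$ is $\var{V}$-closed.

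I do not expect any serious obstacle: the argument is purely formal once the inclusion $\Phi_\var{U}\subseteq\Phi_\var{V}$ is in place, and in fact the finite generation of $H$ is never used in any of the three parts. The only things to watch are the direction of the inclusions—that a larger class of test homomorphisms produces a smaller closure—and the observation that the trivial containment $H\subseteq Cl_\var{V}(H)$ supplies the reverse inclusion needed to conclude (3).
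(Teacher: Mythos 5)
Your argument is correct and complete: the inclusion $\Phi_{\var{U}}\subseteq\Phi_{\var{V}}$ together with the intersection formula from Proposition 1.3 gives (2), and (1) and (3) follow formally exactly as you say (and you are right that finite generation is never needed). The paper itself offers no proof of this lemma, citing it directly from \cite{MSW01} (Corollary 3.1), and your derivation is precisely the standard argument behind that citation, so there is nothing substantive to compare.
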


\subsection{Representation of subgroups by labeled graphs}\label{sp3}

A free group $F$ can be identified with the fundamental group of the bouquet of circles $R$ while each subgroup of $F$ corresponds to an associated graph covering $R$. In this point of view, Stallings introduced a useful notion of a folding of graphs in \cite{Sta83} which is widely used in the research of the lattices of subgroups of free groups from then on. This method is also useful to solve algorithmic problems concerning subgroups of $F$. We briefly describe this technique below and refer to \cite{KM02} \cite{MVW01} for more details.

\subsubsection{Reduced $A$-labeled graph}
Denote the free group $F$ over a finite alphabet $A$ by $F(A)$.

\begin{defn}
An \emph{$A$-labeled graph} $\Gamma$ is a directed graph satisfying the following conditions:

(1) the underlying undirected graph of $\Gamma$ is connected;

(2) there is a prechosen vertex marked by 1 and we call it the base vertex of $\Gamma$;

(3) each edge of $\Gamma$ is labeled by a letter of $A$.
\end{defn}

For an $A$-labeled graph $\Gamma$, if there exist two different edges $e_1, e_2$ with the same origin (resp. with the same terminus) and the same label, then we can identify $e_1, e_2$ to produce a new $A$-labeled graph. This operation is called ``folding of graphs'', see \cite{KM02} for details.

\begin{defn} [See \cite{MVW01}]
An $A$-labeled graph $\Gamma$ is said to be \emph{reduced} if different edges with the same origin (resp. with the same terminus) have different labels, and if each vertex except the base vertex is adjacent to at least two different edges.
\end{defn}

By a path $p$ in an $A$-labeled graph $\Gamma$ we understand a path in the underlying undirected graph of $\Gamma$ being allowed to travel back along edges. The label of $p$ is the word (maybe not a reduced word) in $F(A)$ obtained by concatenating consecutively the labels of the edges crossed by $p$, concatenating $a^{-1}$ whenever an edge labeled by $a\in A$ is crossed backwards. The path $p$ is said to be reduced if the label of $p$ is a reduced word in $F(A)$.


For a reduced $A$-labeled graph $\Gamma$, we can associate a subgroup $L(\Gamma)$ of $F(A)$ with it which is the set of words labeling reduced paths in $\Gamma$ from the base vertex 1 back to itself. Note that if $\Gamma$ is finite, then the associated subgroup $L(\Gamma)$ is finitely generated.

Conversely, suppose $H=\langle h_1,...,h_k\rangle$  is a finitely generated subgroup of $F(A)$ where each $h_i$ is a non-empty reduced words over the alphabet $A$, then $H$ can be represented by a finite reduced $A$-labeled graph through the following steps (see Figure 1 for an example).

\textbf{Step 1}. Suppose $h_1=a_{i_1}^{\epsilon_1}\cdots a_{i_m}^{\epsilon_m}, a_{i_1},...,a_{i_m}\in A, \epsilon_1,...,\epsilon_m=\pm 1$, we construct a $m$ subdivided circle labeled by $h_1$ based on a vertex marked by 1, in which each inverse letter $a_{i_j}^{\epsilon_j}$ where $\epsilon_j=-1$  gives rise to a $a_{i_j}$-labeled edge in the reverse direction on the circle.

\textbf{Step 2}. For each $h_i \in \{h_2,...,h_k\}$, we do the same thing like that in the first step and construct a $m_i$ divided circle in addition to the vertex marked by 1 where $m_i$ is the length of $h_i$. Note that the circle corresponding to $h_i$ has $m_i$ edges and $m_i-1$ vertices. We now get a finite $A$-labeled graph $\Gamma'$ consisting of $k$ circles based on the common vertex 1.

\textbf{Step 3}. We do the operation ``folding of graphs'' and make $\Gamma'$ to be reduced, namely we iteratively identify different edges with the same origin (resp. with the same terminus) and the same label. This process terminates since $\Gamma'$ is a finite graph and it does not matter in which order identification take place. The resulting reduced $A$-labeled graph is denoted by $\Gamma_A(H)$.

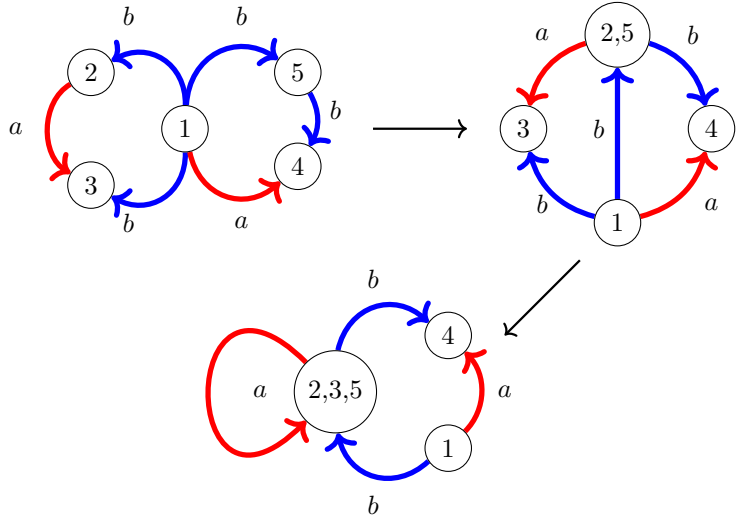
\begin{figure}[H]
	\centering
	\begin{tikzpicture}
		\begin{pgfonlayer}{nodelayer}
			\node [style=new style 0] (0) at (-10.5, 3.75) {1};
			\node [style=new style 0] (1) at (-11.75, 4.5) {2};
			\node [style=new style 0] (2) at (-11.75, 3) {3};
			\node [style=new style 0] (3) at (-9, 4.5) {5};
			\node [style=new style 0] (4) at (-9, 3.25) {4};
			\node [style=none] (5) at (-8, 3.75) {};
			\node [style=none] (6) at (-6.75, 3.75) {};
			\node [style=new style 0] (7) at (-4.75, 2.5) {1};
			\node [style=new style 0] (8) at (-4.75, 5) {2,5};
			\node [style=new style 0] (9) at (-6, 3.75) {3};
			\node [style=new style 0] (10) at (-3.5, 3.75) {4};
			\node [style=new style 0] (11) at (-8.5, 0.25) {2,3,5};
			\node [style=new style 0] (12) at (-7, 1) {4};
			\node [style=new style 0] (13) at (-7, -0.5) {1};
			\node [style=none] (14) at (-12.75, 3.75) {$a$};
			\node [style=none] (15) at (-11.25, 5.25) {$b$};
			\node [style=none] (16) at (-9.75, 2.5) {$a$};
			\node [style=none] (17) at (-9.75, 5.25) {$b$};
			\node [style=none] (19) at (-11.25, 2.5) {$b$};
			\node [style=none] (20) at (-6.25, 0.25) {$a$};
			\node [style=none] (21) at (-8.5, 4) {$b$};
			\node [style=none] (22) at (-8, -1.25) {$b$};
			\node [style=none] (23) at (-5.75, 2.75) {$b$};
			\node [style=none] (24) at (-5.75, 5) {$a$};
			\node [style=none] (25) at (-5, 3.75) {$b$};
			\node [style=none] (26) at (-3.5, 2.75) {$a$};
			\node [style=none] (27) at (-3.75, 5) {$b$};
			\node [style=none] (28) at (-9.5, 0.25) {$a$};
			\node [style=none] (29) at (-8, 1.75) {$b$};
			\node [style=none] (30) at (-5.25, 2) {};
			\node [style=none] (31) at (-6.25, 1) {};
		\end{pgfonlayer}
		\begin{pgfonlayer}{edgelayer}
			\draw [style=new edge style 1, bend right=60, looseness=1.25] (0) to (1);
			\draw [style=new edge style 1, bend left=60, looseness=1.25] (0) to (2);
			\draw [style=new edge style 0, bend right=60] (1) to (2);
			\draw [style=new edge style 0, bend right=60, looseness=1.25] (0) to (4);
			\draw [style=new edge style 1, bend left=60, looseness=1.25] (0) to (3);
			\draw [style=new edge style 1, bend left] (3) to (4);
			\draw [style=new edge style 4] (5.center) to (6.center);
			\draw [style=new edge style 1] (7) to (8);
			\draw [style=new edge style 1, bend left] (8) to (10);
			\draw [style=new edge style 1, bend left] (7) to (9);
			\draw [style=new edge style 0, bend right] (8) to (9);
			\draw [style=new edge style 0, bend right] (7) to (10);
			\draw [style=new edge style 0, in=-135, out=135, loop] (11) to ();
			\draw [style=new edge style 1, bend left=60, looseness=1.25] (11) to (12);
			\draw [style=new edge style 1, bend right=300, looseness=1.25] (13) to (11);
			\draw [style=new edge style 0, bend right=45] (13) to (12);
			\draw [style=new edge style 4] (30.center) to (31.center);
		\end{pgfonlayer}
	\end{tikzpicture}	
	
	\caption{Getting $\Gamma_A(H)$ for $H=\left \langle bab^{-1}, b^2 a^{-1} \right \rangle$ by ``folding of graphs''}\label{Stallings folding}
\end{figure}


A finite reduced $A$-labeled graph $\Gamma$ can be associated with a finitely generated subgroup $L(\Gamma)$ while a finite generated subgroup $H$ can be represented by a finite reduced $A$-labeled graph $\Gamma_A(H)$. In fact, one can prove $L(\Gamma_A(H))=H$ (see \cite{KM02} Theorem 5.1, 5.2) and we have the following proposition.

\begin{prop}
There is a one-to-one correspondence between finitely generated subgroups of $F(A)$ and finite reduced $A$-labeled graphs.
\end{prop}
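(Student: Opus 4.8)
The plan is to show that the two constructions described above, $\Gamma\mapsto L(\Gamma)$ and $H\mapsto \Gamma_A(H)$, are mutually inverse bijections between isomorphism classes of finite reduced $A$-labeled graphs and finitely generated subgroups of $F(A)$. One composite is already available: the cited identity $L(\Gamma_A(H))=H$ shows that $\Gamma_A$ is a section of $L$, so $L$ is surjective and $\Gamma_A$ is injective. It therefore remains to prove the reverse composite, namely that $\Gamma_A(L(\Gamma))\cong\Gamma$ as based $A$-labeled graphs for every finite reduced $\Gamma$, or equivalently that $L$ is injective on isomorphism classes. That $\Gamma_A(H)$ is well defined up to isomorphism already uses the confluence of folding noted above.

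First I would reformulate the two conditions in the definition of a reduced graph in path-theoretic terms. The folding condition guarantees that at each vertex, for every $a\in A$, there is at most one outgoing and at most one incoming edge labeled $a$; hence reading a reduced word $w$ from a fixed vertex determines at most one endpoint, and the same holds reading backwards. The second condition, that every non-base vertex is adjacent to at least two edges, guarantees that each vertex and each edge lies on some reduced loop based at $1$, so that $\Gamma$ is its own core. Now given two finite reduced graphs $\Gamma_1,\Gamma_2$ with $L(\Gamma_1)=L(\Gamma_2)=H$, I would define a label-preserving, base-preserving morphism $\Gamma_1\to\Gamma_2$ by sending a vertex $v$ of $\Gamma_1$ to the endpoint in $\Gamma_2$ of any reduced path from $1$ whose label reaches $v$. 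Well-definedness and injectivity on vertices reduce to the statement that two reduced words $u,v$ reach a common vertex in $\Gamma_1$ if and only if they do so in $\Gamma_2$: reading $u$ forward and $v$ backward closes up a loop at $1$ whose reduced label lies in $L(\Gamma_1)=H$, and membership in $H=L(\Gamma_2)$ together with determinism forces the corresponding endpoints in $\Gamma_2$ to coincide. Surjectivity on vertices and on edges is exactly the core condition, since every vertex and edge of $\Gamma_2$ lies on a reduced loop and is thus read by a word that also occurs in $\Gamma_1$. Checking that the map respects origins, termini and labels of edges then yields the desired isomorphism, and symmetry gives the inverse.

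The step I expect to be the main obstacle is the bookkeeping that converts between paths in the graphs and reduced words in $F(A)$. One must verify carefully that reading a reduced word along a reduced graph is unambiguous, that concatenating the forward path of $u$ with the reversed path of $v$ and freely reducing its label keeps it inside $H$, and conversely that membership of a reduced word in $H$ can be turned back into an actual coincidence of endpoints. Both determinism and co-determinism of the folded graph are needed here, and the delicate point is to ensure that free reduction of a path label neither creates nor destroys the endpoint coincidences on which the vertex map relies. Once this is handled, the remaining verifications that the map is a graph isomorphism are routine.
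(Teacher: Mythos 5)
Your proposal is correct and amounts to the standard argument: the paper gives no proof of this proposition, delegating it to \cite{KM02} (Theorems 5.1, 5.2), and your uniqueness claim --- that two finite reduced $A$-labeled graphs with the same loop language are isomorphic as based labeled graphs, proved via determinism, co-determinism, and the core condition --- is exactly the content supplied by that reference, and you rightly observe that the cited identity $L(\Gamma_A(H))=H$ alone only yields one of the two composites. Your treatment of the delicate point (that free reduction of $u(u')^{-1}$ cancels only a common suffix, so endpoint coincidences survive reduction in a folded graph) is the right key step; the one assertion you leave uncited --- that in a finite connected folded graph the condition that every non-base vertex has degree at least two forces every vertex and edge onto a reduced loop at the base --- is the standard characterization of core graphs and is harmless to quote.
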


\subsubsection{Scheier graphs and Scheier free factors}

There is another way to construct $\Gamma_A(H)$ described in \cite{KM02}. Let $\Gamma(F(A),A)$ be the Cayley graph of $F(A)$ with respect to the basis $A$ and $\Gamma_H= \Gamma(F(A),A)/H$ be the quotient graph of $\Gamma(F(A),A)$ by the right action of $H$ which is usually called Schreier graph or coset graph associated to $H$. The vertex set of $\Gamma_H$ is the set of cosets $\{Hg~|~g\in F(A)\}$, and there is an edge from  $Hg_1$ to $Hg_2$ labeled by $a\in A$ if and only if $Hg_1a=Hg_2$. The vertex corresponding to the coset $H$ is marked by 1, then $\Gamma_H$ is an $A$-labeled infinite graph unless $H$ is of finite index in $F(A)$. Now $\Gamma_A(H)$ is the core of $\Gamma_H$ with respect to the base vertex 1, see \cite{KM02} Theorem 5.1.

\begin{defn}
Let $F(A)$ be the free group with basis $A$ and the elements are understand as reduced words over the alphabet $A$. A prefix-closed subset of  $F(A)$ is called a \emph{Schreier set}. Let $H$ be a subgroup of $F(A)$, a complete Schreier set $T$ of representatives for all cosets of $H$ is called a \emph{Schreier transversal} for $H$ and its associated basis $B_T$ of $H$ is called a \emph{Schreier basis} of $H$. A \emph{Schreier free factor} (with respect to the chosen basis $A$) of $H$ is any subgroup of $H$ that is generated by a subset of some Schreier basis of $H$.
\end{defn}

We can obtain Schreier transversals and Schreier bases easily from the coset graph $\Gamma_H$ of $H$. Choose any spanning tree $L$ of $\Gamma_H$, then the set of all reduced words that label a path in $L$ starting at the base vertex 1 is a Schreier transversal $T$ for $H$. The Schreier basis $B_T$ of $H$ corresponding to $T$ can be obtained as follows. For each edge $e$ in $\Gamma_H$ that is not on the spanning tree $L$, consider the word $w_e:=w_{o_e} a_e w_{t_e}^{-1}$ where $o_e$  is the origin of $e$, $t_e$ is the terminus of $e$,  $a_e\in A$ is the label of $e$, and $w_{o_e}$ (resp $w_{t_e}$) is the representative in $T$ of the coset $o_e$ (resp. $t_e$). Then $B_T=\{w_e~|~e\in \Gamma_H-L\}$ is the corresponding Schreier basis of $H$.

\begin{exam}
Let $H=\left \langle aba^{-1}b,b^{-1}a^{-1}ba^{-1}b,a^{-1}b^{-1},b^{-1}ab^3 a^{-1}b \right \rangle$ be a finitely generated subgroup of $F(A)=\langle a, b\rangle$. $\Gamma_A(H)$ is shown in Figure \ref{Gamma(H) and L} which is also the core of the coset graph $\Gamma_H$ of $H$. A spanning tree $L_0$ of $\Gamma_A(H)$ is depicted by dotted lines which can be easily extended to a spanning tree $L$ of $\Gamma_H$. The corresponding Schreier transversal is
$$T=\{1,b^{-1}ab^{-1},b^{-1}a, b^{-1},b^{-1}ab, a^{-1},\cdots \},$$
and the Schreier basis of $H$ corresponding to $T$ is
			$$B_{T}=\{aba^{-1}b,b^{-1}ab^{-1}ab,b^{-1}abbba^{-1}b, ba \}.$$
\begin{figure}[H]
\centering
	\begin{tikzpicture}
	\begin{pgfonlayer}{nodelayer}
		\node [style=new style 0] (0) at (0, 0) {1};
		\node [style=new style 0] (1) at (2.5, 1.5) {2};
		\node [style=new style 0] (2) at (2.5, -1.25) {4};
		\node [style=new style 0] (3) at (5, 0) {3};
		\node [style=new style 0] (4) at (5, 2) {5};
		\node [style=new style 0] (5) at (-3, 0) {6};
		\node [style=none] (6) at (-1.75, 1.25) {$b$};
		\node [style=none] (7) at (-1.5, -1.25) {$a$};
		\node [style=none] (8) at (1, -1) {$b$};
		\node [style=none] (9) at (1, 1.25) {$a$};
		\node [style=none] (10) at (4, 1.25) {$b$};
		\node [style=none] (11) at (4, -1) {$a$};
		\node [style=none] (12) at (5.75, 1) {$b$};
		\node [style=none] (13) at (3.75, 2.5) {$b$};
		\node [style=none] (14) at (2.75, 0) {$a$};
	\end{pgfonlayer}
	\begin{pgfonlayer}{edgelayer}
		\draw [style=new edge style 0] (1) to (2);
		\draw [style=new edge style 3, bend left=15] (2) to (0);
		\draw [style=new edge style 2, bend right=15] (2) to (3);
		\draw [style=new edge style 3, bend left=15] (1) to (3);
		\draw [style=new edge style 1, bend right=45, looseness=1.50] (0) to (5);
		\draw [style=new edge style 0, bend left=15] (0) to (1);
		\draw [style=new edge style 1, bend right] (4) to (1);
		\draw [style=new edge style 3, in=-45, out=45] (3) to (4);
		\draw [style=new edge style 2, bend right=60, looseness=1.25] (5) to (0);
	\end{pgfonlayer}
	\end{tikzpicture}
\caption{$\Gamma_A(H)$ and its spanning tree $L_0$}\label{Gamma(H) and L}
\end{figure}
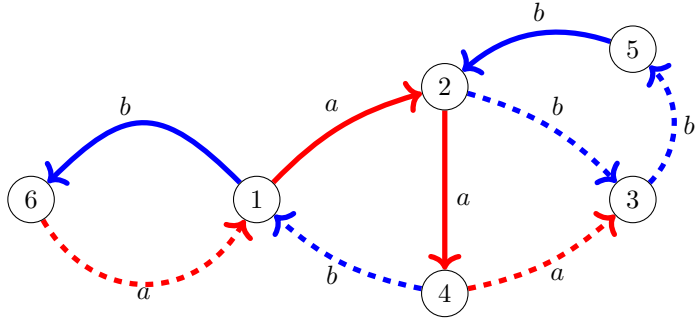

%
%
%
%
%
%
%

\end{exam}

\begin{lem}\label{Schf}
Let $H$ be a finitely generated subgroup of $F(A)$ represented by a reduced $A$-labeled graph $\Gamma_A(H)$. Suppose $\Delta$ is a reduced $A$-labeled subgraph of $\Gamma_A(H)$, then the subgroup $L(\Delta)$ associated with $\Delta$ is a Schreier free factor of $H$.
\end{lem}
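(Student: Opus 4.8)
The plan is to exhibit $L(\Delta)$ as the subgroup generated by a subset of a single, cleverly chosen Schreier basis of $H$. First I would record the easy inclusion: since $\Delta$ is an $A$-labeled subgraph it is connected and shares the base vertex $1$ with $\Gamma_A(H)$, and since a reduced loop at $1$ in $\Delta$ is also a reduced loop at $1$ in $\Gamma_A(H)$ (the label is the same word, so ``reduced'' means the same thing in both graphs), we get $L(\Delta)\leq L(\Gamma_A(H))=H$. So $L(\Delta)$ is genuinely a subgroup of $H$, and it remains only to produce a Schreier basis of $H$ containing a generating set of it.

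The core construction is a nested choice of spanning trees. I would pick a spanning tree $L_\Delta$ of $\Delta$, extend it to a spanning tree $L_0$ of $\Gamma_A(H)$, and then extend that to a spanning tree $L$ of the full coset graph $\Gamma_H$, so that $L_\Delta\subseteq L_0\subseteq L$. These extensions exist because $\Delta\subseteq\Gamma_A(H)\subseteq\Gamma_H$ are connected and share the base vertex, which is exactly the standard fact that a spanning forest of a subgraph extends to a spanning tree of the ambient connected graph. The tree $L$ determines a Schreier transversal $T$ and the associated Schreier basis $B_T=\{w_e\mid e\in\Gamma_H-L\}$, where $w_e=w_{o_e}a_e w_{t_e}^{-1}$ and $w_{o_e},w_{t_e}$ are the $T$-representatives of the cosets $o_e,t_e$.

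The key step is to match this global bookkeeping against the intrinsic description of $L(\Delta)$. For an edge $e$ of $\Delta$ not in $L_\Delta$ (hence not in $L$), both endpoints $o_e,t_e$ are vertices of $\Delta$. I claim the reduced path from $1$ to such a vertex inside the tree $L$ already lies inside $L_\Delta$: the $L_\Delta$-path from $1$ to the vertex is a reduced path in $L$ between the same endpoints, and by uniqueness of reduced paths in a tree it must be \emph{the} $L$-path. Consequently the transversal representatives $w_{o_e},w_{t_e}$ are exactly the $L_\Delta$-tree labels, so $w_e$ is precisely the standard free generator of $\pi_1(\Delta,1)$ attached to the non-tree edge $e$. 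Since $\Delta$ is a connected reduced graph with spanning tree $L_\Delta$, the subgroup $L(\Delta)$ is free on exactly these generators $\{w_e\mid e\in\Delta-L_\Delta\}$, a subset of $B_T$; hence $L(\Delta)$ is a Schreier free factor of $H$.

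The main obstacle I anticipate is precisely this comparison step: one must check that the coset representatives produced by the ambient tree $L$ restrict correctly to $\Delta$, i.e.\ that the Schreier words $w_{o_e}$ coincide with the tree-paths computed inside $\Delta$. Everything hinges on the nesting $L_\Delta\subseteq L$ together with uniqueness of reduced paths in trees; once that identification is in place, the rest is the usual Stallings / Nielsen--Schreier accounting and requires no further ideas.
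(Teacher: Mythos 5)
Your proof is correct, and it is essentially the standard argument: the paper itself does not prove this lemma but defers to Kapovich--Miasnikov (Lemma 6.1 and Proposition 6.2 of \cite{KM02}), where the same nested-spanning-tree construction ($L_\Delta\subseteq L_0\subseteq L$) and the identification of the resulting Schreier generators with the standard free generators of $\pi_1(\Delta,1)$ appear. The one step you gloss over --- that an edge of $\Delta-L_\Delta$ cannot lie in $L$ --- follows immediately since its endpoints are already joined by a path in $L_\Delta\subseteq L$, so including it would create a cycle in the tree $L$.
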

\begin{proof}
See \cite{KM02} Lemma 6.1 and Proposition 6.2.
\end{proof}

\subsubsection{Morphisms between reduced $A$-labeled graphs}

\begin{defn}
A morphism $\phi$ between two reduced $A$-labeled graphs $\Gamma$ and $\Delta$ is a map from $\Gamma$ to $\Delta$ that preserves the structure of $A$-labeled graphs. Namely, $\phi$ sends the base vertex to the base vertex, and vertices to vertices, and edges to edges such that whenever $\Gamma$ has an edge $e$ labeled by $a\in A$ from $o_e$ to $t_e$, then $\phi(e)$ is an edge labeled by $a$ from $\phi(o_e)$ to $\phi(t_e)$ in $\Delta$.
\end{defn}

It is easy to observe that a morphism of reduced $A$-labeled graphs is necessarily locally injective (an immersion in \cite{Sta83}), namely for each vertex $v$ of $\Gamma$, different edges with the same origin (resp. the same terminus) $v$ have different images. The following properties are well known and proved in \cite{KM02} (or \cite{MVW01}).

\begin{prop}\label{morbasic}
Let $H, K$ be two finitely generated subgroups of $F(A)$. Then

(1) A morphism $\Gamma_A(H)\to \Gamma_A(K)$ exists if and only if $H\leq K$.

(2) If a morphism $\Gamma_A(H)\to \Gamma_A(K)$ exists, it is unique. We denote it by $\phi_{H,K}$.

(3) If a morphism $\Gamma_A(H)\to \Gamma_A(K)$ is injective, namely $\phi_{H,K}$  is one-to-one (if and only if it is one-to-one on vertices), then $H$ is a free factor of $K$.
\end{prop}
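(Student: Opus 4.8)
The plan is to exploit two structural features of reduced $A$-labeled graphs that are implicit in the preceding discussion. First, the folding condition makes such a graph \emph{deterministic} and \emph{co-deterministic}, so that a fixed word in $F(A)$ labels at most one path starting at a prescribed vertex. Second, as already observed, any morphism of reduced graphs is an immersion (locally injective), and consequently carries reduced paths to reduced paths, since it cannot create a backtrack that was not already present at the corresponding vertex. Both facts will be used repeatedly, and throughout I rely on the identification $L(\Gamma_A(H))=H$ established earlier.

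For the ``only if'' half of (1), suppose a morphism $\phi\colon \Gamma_A(H)\to \Gamma_A(K)$ exists and take $h\in H$. Then $h$ labels a reduced loop $p$ at the base vertex of $\Gamma_A(H)$; applying $\phi$ yields a loop $\phi(p)$ at the base vertex of $\Gamma_A(K)$ with the same label $h$, and by the immersion property $\phi(p)$ is again reduced, so $h\in L(\Gamma_A(K))=K$ and hence $H\leq K$. For the ``if'' half, I would realise $\Gamma_A(H)$ as the core of the coset graph $\Gamma_H=\Gamma(F(A),A)/H$ and use the inclusion $H\leq K$ to define the label- and base-preserving projection $\pi\colon \Gamma_H\to \Gamma_K$, $Hg\mapsto Kg$, which is well defined precisely because $H\leq K$. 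The step that needs care is checking that $\pi$ maps the core of $\Gamma_H$ into the core of $\Gamma_K$: a vertex $Hg$ lying on a reduced loop at the base is sent to $Kg$ lying on the $\pi$-image loop, which is still reduced because $\pi$ is itself an immersion, so $Kg$ belongs to the core $\Gamma_A(K)$. The restriction of $\pi$ is then the desired morphism.

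Part (2) follows from determinism alone. Given two morphisms $\phi,\psi\colon \Gamma_A(H)\to \Gamma_A(K)$ and any vertex $v$, choose a reduced path $p$ from the base to $v$ with label $w$; then $\phi(p)$ and $\psi(p)$ are both paths in $\Gamma_A(K)$ starting at the base vertex and labeled by the same word $w$, so by (co-)determinism they coincide as paths and in particular $\phi(v)=\psi(v)$. Since every vertex is reached this way and edges are determined by their endpoints and labels, $\phi=\psi$.

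For part (3), I first note that injectivity on vertices forces injectivity on edges: two distinct edges of $\Gamma_A(H)$ with the same image would share endpoints and label, contradicting the folding condition. Thus $\phi_{H,K}$ is an isomorphism onto its image $\Delta:=\phi_{H,K}(\Gamma_A(H))$, a base-preserving $A$-labeled subgraph of $\Gamma_A(K)$ that is reduced because it is isomorphic to $\Gamma_A(H)$, and with $L(\Delta)=L(\Gamma_A(H))=H$. Applying Lemma \ref{Schf} with $K$ in place of $H$ then identifies $H=L(\Delta)$ as a Schreier free factor, hence a free factor, of $K$. I expect the only genuine obstacle to be the ``if'' half of (1): constructing the morphism from the abstract inclusion $H\leq K$ and verifying that the coset projection respects the passage to cores; the remaining parts are formal consequences of determinism and the immersion property.
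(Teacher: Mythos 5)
Your proof is correct. The paper offers no proof of this proposition, deferring to \cite{KM02} and \cite{MVW01}, and your argument --- the coset-graph projection $Hg\mapsto Kg$ restricted to cores for (1), determinism of folded graphs for (2), and Lemma \ref{Schf} applied to the embedded reduced subgraph for (3) --- is exactly the standard one found in those references and is already echoed in the paper's own Remark \ref{mSchf}.
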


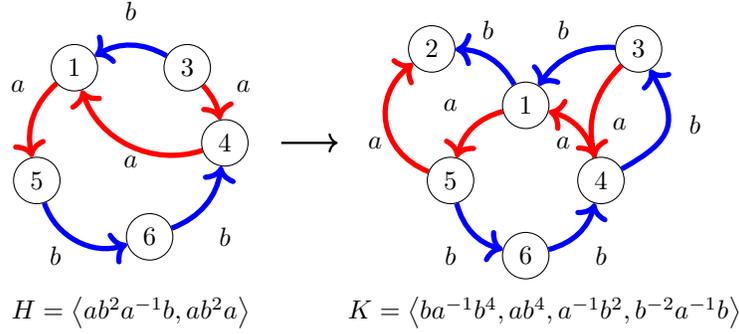
\begin{figure}[H]
	\centering
	\begin{tikzpicture}
		\begin{pgfonlayer}{nodelayer}
			\node [style=new style 0] (0) at (-0.5, 0.5) {1};
			\node [style=new style 0] (1) at (-1, -1) {5};
			\node [style=new style 0] (3) at (1, 0.5) {3};
			\node [style=new style 0] (4) at (1.5, -0.5) {4};
			\node [style=new style 0] (6) at (0.5, -1.75) {6};
			\node [style=none] (8) at (-0.75, -2) {$b$};
			\node [style=none] (9) at (1.5, -1.75) {$b$};
			\node [style=none] (11) at (-1.25, 0.25) {$a$};
			\node [style=none] (13) at (0.25, 1.25) {$b$};
			\node [style=none] (14) at (1.75, 0.25) {$a$};
			\node [style=new style 0] (17) at (5.5, 0) {1};
			\node [style=new style 0] (18) at (4.5, -1) {5};
			\node [style=new style 0] (19) at (7, 0.75) {3};
			\node [style=new style 0] (20) at (6.5, -1) {4};
			\node [style=new style 0] (21) at (5.5, -2) {6};
			\node [style=new style 0] (22) at (4.25, 0.75) {2};
			\node [style=none] (23) at (4.5, -2) {$b$};
			\node [style=none] (24) at (6.5, -2) {$b$};
			\node [style=none] (25) at (3.5, -0.5) {$a$};
			\node [style=none] (26) at (4.5, 0) {$a$};
			\node [style=none] (27) at (5, 1) {$b$};
			\node [style=none] (28) at (6, 1) {$b$};
			\node [style=none] (29) at (6.75, -0.25) {$a$};
			\node [style=none] (30) at (6, -0.5) {$a$};
			\node [style=none] (31) at (7.75, -0.25) {$b$};
			\node [style=none] (32) at (2.25, -0.5) {};
			\node [style=none] (33) at (3, -0.5) {};
			\node [style=none] (34) at (0.25, -2.75) {$H=\left \langle ab^2a^{-1}b,ab^2a \right \rangle$};
			\node [style=none] (35) at (5.75, -2.75) {$K=\left \langle ba^{-1}b^4,ab^4,a^{-1}b^2,b^{-2}a^{-1}b \right \rangle$};
			\node [style=none] (36) at (0.25, -0.75) {$a$};
		\end{pgfonlayer}
		\begin{pgfonlayer}{edgelayer}
			\draw [style=new edge style 0, bend right] (0) to (1);
			\draw [style=new edge style 0, bend left=15] (3) to (4);
			\draw [style=new edge style 1, bend right] (3) to (0);
			\draw [style=new edge style 1, bend right=45] (1) to (6);
			\draw [style=new edge style 1, bend right] (6) to (4);
			\draw [style=new edge style 0, bend left=60, looseness=1.25] (18) to (22);
			\draw [style=new edge style 0, bend right] (17) to (18);
			\draw [style=new edge style 0, bend right] (20) to (17);
			\draw [style=new edge style 0, bend right] (19) to (20);
			\draw [style=new edge style 1, bend right] (19) to (17);
			\draw [style=new edge style 1, bend right] (17) to (22);
			\draw [style=new edge style 1, bend right] (18) to (21);
			\draw [style=new edge style 1, bend right] (21) to (20);
			\draw [style=new edge style 1, bend right=45, looseness=1.50] (20) to (19);
			\draw [style=new edge style 4] (32.center) to (33.center);
			\draw [style=new edge style 0, bend right=315] (4) to (0);
		\end{pgfonlayer}
	\end{tikzpicture}
	
	\caption{Injective morphism between reduced $A$-labeled graphs}\label{fig injective morphism}
	\end{figure}

\begin{rem}\label{mSchf}
If the morphism $\Gamma_A(H)\to \Gamma_A(K)$ is injective, namely the image of $\Gamma_A(H)$ is a reduced $A$-labeled subgraph of $\Gamma_A(K)$, then $H$ is a Schreier free factor of $K$ by Lemma \ref{Schf}.
\end{rem}

\begin{defn}
Let $H, K$ be two finitely generated subgroups of $F(A)$. If $\phi_{H,K}:\Gamma_A(H)\to \Gamma_A(K)$ is surjective (both on vertices and on edges), then we say that $K$ is an overgroup of $H$ (with respect to $A$). The set of all overgroups of $H$ is called the \emph{$A$-fringe} of $H$, denoted by $\mathcal{O}_A(H)$.
\end{defn}

\begin{figure}[H]
\centering
	\begin{tikzpicture}
	\begin{pgfonlayer}{nodelayer}
		\node [style=new style 0] (0) at (-5, 1.75) {2};
		\node [style=new style 0] (1) at (-5.75, 0.5) {1};
		\node [style=new style 0] (2) at (-3.25, 1.5) {3};
		\node [style=new style 0] (3) at (-5, -0.75) {5};
		\node [style=new style 0] (4) at (-3.25, -0.5) {4};
		\node [style=none] (5) at (-6, 1.5) {$a$};
		\node [style=none] (6) at (-6, -0.5) {$a$};
		\node [style=none] (7) at (-3.75, -1.25) {$b$};
		\node [style=none] (8) at (-2.5, 0.5) {$a$};
		\node [style=none] (9) at (-4, 2.25) {$b$};
		\node [style=new style 0] (10) at (0, 0.5) {1};
		\node [style=new style 0] (11) at (1.5, 0.5) {2,5};
		\node [style=new style 0] (12) at (3, 0.5) {3,4};
		\node [style=none] (13) at (0.75, -0.5) {$a$};
		\node [style=none] (14) at (2.25, -0.5) {$b$};
		\node [style=none] (15) at (0.75, 1.5) {$a$};
		\node [style=none] (16) at (2.25, 1.5) {$b$};
		\node [style=none] (17) at (-1.5, 0.5) {};
		\node [style=none] (18) at (-0.75, 0.5) {};
		\node [style=none] (19) at (-4.5, -1.75) {$H=\left \langle ab^2a^{-1},aba^2b^{-1}a^{-1},ababa \right \rangle$};
		\node [style=none] (20) at (2, -1.75) {$K=\left \langle a^2,ab^2a,ababa \right \rangle$};
		\node [style=none] (26) at (4, 0.5) {$a$};
		\node [style=none] (27) at (-4.25, 0.75) {$b$};
		\node [style=none] (28) at (-4, 0) {$a$};
	\end{pgfonlayer}
	\begin{pgfonlayer}{edgelayer}
		\draw [style=new edge style 0, bend left] (1) to (0);
		\draw [style=new edge style 0, bend left] (3) to (1);
		\draw [style=new edge style 1, bend left] (0) to (2);
		\draw [style=new edge style 1, bend left] (4) to (3);
		\draw [style=new edge style 1, bend left=60, looseness=1.25] (11) to (12);
		\draw [style=new edge style 0, bend left=60, looseness=1.25] (11) to (10);
		\draw [style=new edge style 4] (17.center) to (18.center);
		\draw [style=new edge style 0, bend left=45] (2) to (4);
		\draw [style=new edge style 0, bend left=60, looseness=1.25] (10) to (11);
		\draw [style=new edge style 1, bend left=60, looseness=1.25] (12) to (11);
		\draw [style=new edge style 0, in=-45, out=45, loop] (12) to ();
		\draw [style=new edge style 1, bend right=315] (2) to (0);
		\draw [style=new edge style 0, bend left] (4) to (2);
	\end{pgfonlayer}
	\end{tikzpicture}	
\caption{Surjective morphism between reduced $A$-labeled graphs}\label{fig surjective morphism}
\end{figure}
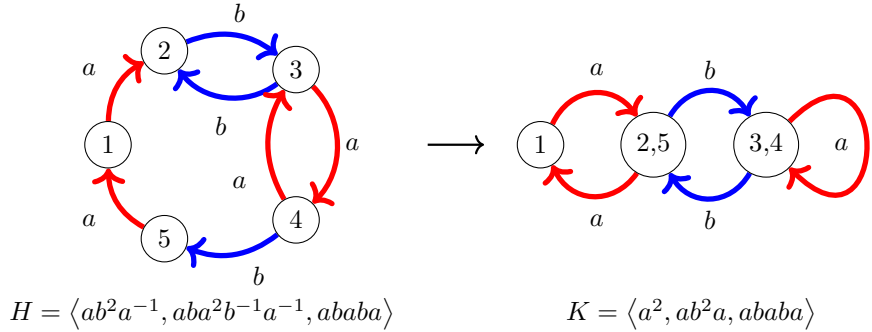

Since a finite reduced $A$-labeled graph can only be mapped onto finitely many reduced $A$-labeled graphs, we have

\begin{prop}
Let $H$ be a finitely generated subgroup of $F(A)$, then $\mathcal{O}_A(H)$ is a finite set.
\end{prop}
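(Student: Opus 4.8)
The plan is to exploit the combinatorial finiteness of quotients of a fixed finite graph, as the sentence preceding the statement already suggests. Since $H$ is finitely generated, its reduced $A$-labeled graph $\Gamma_A(H)$ is finite; write $V$ for its (finite) vertex set and $E$ for its (finite) edge set. By definition, every element $K\in\mathcal{O}_A(H)$ comes equipped with a morphism $\phi_{H,K}\colon\Gamma_A(H)\to\Gamma_A(K)$ that is surjective on both vertices and edges. The first step is therefore to observe that such a $K$ is entirely reconstructible from $\Gamma_A(H)$: the target $\Gamma_A(K)$ is a quotient of $\Gamma_A(H)$, its vertices being the images $\phi_{H,K}(v)$ for $v\in V$ and its edges the images $\phi_{H,K}(e)$ for $e\in E$, with labels preserved.

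The key reduction is that the morphism $\phi_{H,K}$, and hence the quotient graph, is completely determined by the equivalence relation it induces on $V$, namely $v\sim v'$ iff $\phi_{H,K}(v)=\phi_{H,K}(v')$. Indeed, once the vertex identifications are fixed, the edge identifications are forced: two edges of $\Gamma_A(H)$ are identified in $\Gamma_A(K)$ precisely when they carry the same label and their respective origins and termini are identified, which is exactly the condition forced by the defining property of a morphism together with local injectivity (reducedness of the target). Thus I would set up an injection from $\mathcal{O}_A(H)$ into the set of equivalence relations (equivalently, partitions) of the finite set $V$. Since a finite set has only finitely many partitions, this set is finite.

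Finally, to conclude I would invoke the one-to-one correspondence between finitely generated subgroups of $F(A)$ and finite reduced $A$-labeled graphs (the proposition recalled earlier, using $L(\Gamma_A(K))=K$): distinct overgroups $K$ give distinct graphs $\Gamma_A(K)$, and each such graph arises as a quotient of $\Gamma_A(H)$ indexed by a partition of $V$. Composing, the map sending $K\mapsto(\text{partition of }V\text{ induced by }\phi_{H,K})$ is well defined and injective, so $\lvert\mathcal{O}_A(H)\rvert$ is bounded by the number of partitions of $V$, which is finite.

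The only point requiring care—and the step I expect to be the main (mild) obstacle—is verifying that every partition-induced quotient that does arise genuinely yields a \emph{reduced} $A$-labeled graph representing an honest finitely generated subgroup, so that the correspondence applies; but this is automatic, since a surjective morphism out of a reduced graph lands in a reduced graph (local injectivity is preserved), and any finite reduced $A$-labeled graph represents a finitely generated subgroup. Hence the bound on $\mathcal{O}_A(H)$ by partitions of $V$ gives finiteness directly, with no extraneous partitions needing to be excluded for the finiteness conclusion.
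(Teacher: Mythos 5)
Your proof is correct and follows essentially the same route as the paper, which justifies the proposition by the single observation that a finite reduced $A$-labeled graph can be mapped onto only finitely many reduced $A$-labeled graphs. Your elaboration --- that each overgroup $K$ is determined by the vertex partition of $\Gamma_A(H)$ induced by $\phi_{H,K}$, since the edge identifications are then forced by reducedness of the target, giving an injection of $\mathcal{O}_A(H)$ into the finite set of partitions of the vertex set --- is exactly the content the paper leaves implicit.
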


\subsection{$\var{V}$-preclosed subgroups}\label{sp4}

\begin{defn}[\cite{Weil98}, Section 2.1]
Let $\var{V}$ be a variety. A subgroup $H$ of $F(A)$ is said to be $\var{V}$-preclosed if there exists a $\var{V}$-open subgroup $K$ containing $H$ such that the natural morphism $\phi_{H,K}: \Gamma_A(H)\to \Gamma_A(K)$ is injective. The intersection of all $\var{V}$-preclosed subgroups that containing $H$ is called the $\var{V}$-preclosure of $H$.
\end{defn}
%

%
%


A $\var{V}$-preclosed subgroup is a free factor of a $\var{V}$-open subgroup by Proposition \ref{morbasic} (3), but the converse does not hold in general, see \cite{Weil98} Example 2.3. This is because the notion of $\var{V}$-preclosed subgroup is not an algebraic notion and it depends on the chosen basis $A$.

In \cite{Weil98} Section 2, the author discussed the relation between $\var{V}$-closed subgroups and $\var{V}$-preclosed subgroups, and proved
that
\begin{prop}[\cite{Weil98}, Proposition 2.8]\label{ctopc}
A  $\var{V}$-closed subgroup of the free group $F$ is $\var{V}$-preclosed.
\end{prop}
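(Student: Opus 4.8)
The plan is to produce, from the topological hypothesis, a single $\var{V}$-open subgroup $K$ containing $H$ for which the canonical morphism $\phi_{H,K}\colon \Gamma_A(H)\to \Gamma_A(K)$ is injective; by the definition of $\var{V}$-preclosedness this is exactly what must be exhibited, and by the parenthetical equivalence in Proposition \ref{morbasic}(3) it suffices to make $\phi_{H,K}$ injective on vertices. Since $H$ is finitely generated, $\Gamma_A(H)$ has only finitely many vertices, and this finiteness is what will let me amalgamate finitely many open subgroups into one.

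First I would fix, for each vertex $v$ of $\Gamma_A(H)$, a word $t_v\in F(A)$ labelling a reduced path from the base vertex $1$ to $v$ (with $t_1=1$); reading such a word in $\Gamma_A(K)$ from its base vertex lands on $\phi_{H,K}(v)$, the vertex corresponding to the coset $Kt_v$. Hence for two vertices $u,v$ one has $\phi_{H,K}(u)=\phi_{H,K}(v)$ if and only if $Kt_u=Kt_v$, i.e. $t_u t_v^{-1}\in K$; and since distinct vertices of $\Gamma_A(H)$ correspond to distinct cosets of $H$, distinctness of $u$ and $v$ means precisely $t_u t_v^{-1}\notin H$. Thus $\phi_{H,K}$ is injective on vertices exactly when $K$ avoids all of the finitely many elements $g_{uv}:=t_u t_v^{-1}$ with $u\neq v$.

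Next I would invoke the hypothesis. Because $H$ is $\var{V}$-closed, the description of the closure as an intersection (Proposition 1.3 of \cite{MSW01}) gives $H=\bigcap\{K'\mid K' \text{ is } \var{V}\text{-open},\ H\leq K'\}$. For each of the finitely many pairs $u\neq v$ we have $g_{uv}\notin H$, so there is a $\var{V}$-open subgroup $K_{uv}\supseteq H$ with $g_{uv}\notin K_{uv}$. Setting $K:=\bigcap_{u\neq v}K_{uv}$, a finite intersection of $\var{V}$-open subgroups containing $H$, the subgroup $K$ is again $\var{V}$-open and contains $H$, and $g_{uv}\notin K$ for every pair $u\neq v$. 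Then $\phi_{H,K}$ (which exists by Proposition \ref{morbasic}(1), as $H\leq K$) is injective on vertices, hence injective, witnessing that $H$ is $\var{V}$-preclosed.

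The only genuinely delicate point is the translation in the second paragraph between the combinatorics of $\phi_{H,K}$ and the coset condition $t_u t_v^{-1}\in K$: one must check that reading $t_v$ in $\Gamma_A(K)$ really reaches $\phi_{H,K}(v)$ (this uses that a morphism of reduced $A$-labelled graphs preserves labelled paths issuing from the base vertex) and that injectivity on vertices upgrades to injectivity on edges (immediate, since in a reduced $A$-labelled graph an edge is determined by its origin and its label). Everything else—the finiteness of the set of pairs and the fact that a finite intersection of $\var{V}$-open subgroups is $\var{V}$-open—is routine.
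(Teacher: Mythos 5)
Your argument is correct and complete, and it is essentially the standard proof of this fact; the paper itself offers no proof here, only the citation to \cite{Weil98}, so your write-up actually supplies what the paper leaves to the reference. The key reduction --- identify the vertices of $\Gamma_A(H)$ with cosets $Ht_v$ via the Schreier-graph description, observe that $\phi_{H,K}$ sends the vertex $Ht_v$ to $Kt_v$, and then separate the finitely many elements $t_ut_v^{-1}\notin H$ from $H$ using $\var{V}$-open subgroups and intersect --- is sound, and the auxiliary facts you rely on (a finite intersection of $\var{V}$-open subgroups is $\var{V}$-open because each contains a normal $\var{V}$-open subgroup and $\var{V}$ is closed under subgroups and finite direct products; injectivity on vertices implies injectivity on edges in a reduced graph) all hold. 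One point worth flagging: the statement as printed omits the hypothesis that $H$ is finitely generated, but your proof uses it (finitely many vertices, hence finitely many pairs), and it is genuinely necessary --- a non-finitely-generated subgroup such as $[F,F]$ can be closed yet is never $\var{V}$-preclosed, since $\Gamma_A(H)$ would be infinite while $\Gamma_A(K)$ is finite for any open $K$. Since every application in the paper (Propositions \ref{cpclf}, \ref{cpclfhall} and Corollary \ref{hpclo}) is for finitely generated $H$, this is the intended standing assumption, and your proof is exactly right in that setting.
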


\begin{prop}[\cite{Weil98}, Section 2.1.2]\label{preover}
Let $H$ be a finitely generated subgroup of $F(A)$, then the $\var{V}$-preclosure of $H$ is an overgroup of $H$.
\end{prop}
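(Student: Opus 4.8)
The plan is to realize the $\var{V}$-preclosure $P$ of $H$ (the intersection of all $\var{V}$-preclosed subgroups containing $H$) as a \emph{finite} intersection of $\var{V}$-preclosed overgroups of $H$, to show that such an intersection is again $\var{V}$-preclosed, and finally to deduce by a minimality argument that $\phi_{H,P}$ is onto. The basic tool is an \emph{image factorization}: for any subgroups $H\leq K$ of $F(A)$ the morphism $\phi_{H,K}\colon\Gamma_A(H)\to\Gamma_A(K)$ has as image a connected reduced $A$-labeled subgraph of $\Gamma_A(K)$ containing the base vertex; writing $K'=L(\im\phi_{H,K})$ for the associated subgroup, Lemma \ref{Schf} and Remark \ref{mSchf} give $H\leq K'\leq K$ with $\phi_{K',K}$ injective, while $\phi_{H,K'}$ is the surjective corestriction of $\phi_{H,K}$ onto its image, so $K'$ is an overgroup of $H$ (and a Schreier free factor of $K$).

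First I would prove the following consequence of this factorization: if $K$ is $\var{V}$-preclosed and $H\leq K$, then $K'=L(\im\phi_{H,K})$ is a $\var{V}$-preclosed overgroup of $H$ with $H\leq K'\leq K$. Indeed, preclosedness of $K$ provides a $\var{V}$-open subgroup $J\supseteq K$ with $\phi_{K,J}$ injective; by uniqueness of morphisms (Proposition \ref{morbasic}(2)) we have $\phi_{K',J}=\phi_{K,J}\circ\phi_{K',K}$, a composite of injective morphisms, hence injective, so $K'$ is $\var{V}$-preclosed. Using this, every $\var{V}$-preclosed subgroup $M\supseteq H$ dominates the $\var{V}$-preclosed overgroup $M'=L(\im\phi_{H,M})\leq M$, and a routine double inclusion then shows
$$P=\bigcap\{\,K : K \text{ is a }\var{V}\text{-preclosed overgroup of } H\,\}.$$
Since the $A$-fringe $\mathcal{O}_A(H)$ is finite, this is an intersection of finitely many subgroups, say $P=O_1\cap\cdots\cap O_r$ with each $O_i$ a $\var{V}$-preclosed overgroup.

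Next I would show $P$ is itself $\var{V}$-preclosed. For each $i$ choose a $\var{V}$-open $J_i\supseteq O_i$ with $\phi_{O_i,J_i}$ injective, and set $J=J_1\cap\cdots\cap J_r$, which is again $\var{V}$-open because a finite intersection of $\var{V}$-open subgroups is $\var{V}$-open ($\var{V}$ being closed under subgroups and direct products); moreover $P\leq J$. It remains to see that $\phi_{P,J}$ is injective. Realizing each $\Gamma_A(O_i)\hookrightarrow\Gamma_A(J_i)$ as a subgraph inclusion and describing $\Gamma_A(P)$, resp.\ $\Gamma_A(J)$, as the base-vertex core of the fibre product (pullback over the rose) of the $\Gamma_A(O_i)$, resp.\ of the $\Gamma_A(J_i)$, one checks that the fibre product of subgraph inclusions is again a subgraph inclusion, so $\phi_{P,J}$ is injective and $P$ is $\var{V}$-preclosed. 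Finally, applying the image factorization to $K=P$ yields a $\var{V}$-preclosed overgroup $P'=L(\im\phi_{H,P})$ with $H\leq P'\leq P$; since $P$ is the intersection of all $\var{V}$-preclosed subgroups containing $H$ and $P'$ is one of them, $P\leq P'$, whence $P=P'$. Thus $\Gamma_A(P)=\im\phi_{H,P}$, i.e.\ $\phi_{H,P}$ is surjective and $P\in\mathcal{O}_A(H)$.

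I expect the main obstacle to be the injectivity of $\phi_{P,J}$, that is, the stability of subgraph inclusions (equivalently, of the Schreier free factor relation coming from Lemma \ref{Schf}) under intersection of subgroups. This is where the fibre-product description of $\Gamma_A(M_1\cap M_2)$ as the core of the pullback of $\Gamma_A(M_1)$ and $\Gamma_A(M_2)$ is essential, and some care is needed to track base-point components and cores so that the inclusion is genuinely preserved. Everything else is formal once the image factorization and the finiteness of $\mathcal{O}_A(H)$ are in hand.
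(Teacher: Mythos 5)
Your argument is correct and is essentially the standard one behind the cited result of Weil (the paper itself gives no proof, only the reference to \cite{Weil98}): factor $\phi_{H,K}$ through its image to replace each $\var{V}$-preclosed subgroup containing $H$ by a $\var{V}$-preclosed overgroup, use finiteness of $\mathcal{O}_A(H)$ to write the preclosure as a finite intersection, and check that preclosedness survives finite intersections via the pullback description of $\Gamma_A(M_1\cap M_2)$. The one step you flag as delicate --- injectivity of $\phi_{P,J}$ --- does go through: if $\phi_{O_i,J_i}$ is injective on vertices then $J_ig=J_ig'$ forces $O_ig=O_ig'$ for each $i$, hence $(\bigcap_i O_i)g=(\bigcap_i O_i)g'$, so $\phi_{\bigcap O_i,\bigcap J_i}$ is injective on vertices and therefore injective by Proposition \ref{morbasic}(3).
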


The author also found that under some assumptions about the variety $\var{V}$, these two notions coincide.

\begin{defn}
A variety $\var{V}$ is called \emph{extension-closed} if for a finite group $G$ whenever a normal subgroup $K$ of $G$  and $G/K$ lie in $\var{V}$, then $G$ is in $\var{V}$.
\end{defn}

\begin{prop}[\cite{Weil98}, Proposion 2.9]\label{cpclf}
Assume $\var{V}$ is an extension-closed variety and let $H$ be a finitely generated subgroup of the free group. Then the following are equivalent:

(1) $H$ is $\var{V}$-closed;

(2) $H$ is $\var{V}$-preclosed;

(3) $H$ is a free factor of a $\var{V}$-open subgroup.
\end{prop}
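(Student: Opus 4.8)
The plan is to prove the cycle $(1)\Rightarrow(2)\Rightarrow(3)\Rightarrow(1)$, with extension-closedness entering only in the last implication. The first two are immediate from the machinery already recorded: $(1)\Rightarrow(2)$ is exactly Proposition \ref{ctopc}, and $(2)\Rightarrow(3)$ follows from the definition of $\var{V}$-preclosed together with Proposition \ref{morbasic}(3), since by definition an injective morphism $\phi_{H,K}$ into a $\var{V}$-open $K$ exhibits $H$ as a free factor of $K$. Both hold for arbitrary varieties, so the entire content sits in $(3)\Rightarrow(1)$.

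So I assume $H$ is a free factor of a $\var{V}$-open subgroup $K$, say $K=H*L$, and I would first record two preliminary facts (we may assume $\var{V}$ is nontrivial, else all three conditions degenerate). Since $K$ is $\var{V}$-open it contains a normal $N_0$ with $F/N_0\in\var{V}$ finite, so $K$ has finite index in $F$; an open subgroup of a topological group is closed, hence $K$ is $\var{V}$-closed and $Cl_{\var{V}}(H)\le Cl_{\var{V}}(K)=K$. Secondly, because $\var{V}$ is extension-closed and nontrivial it contains $\Z/q$ for some prime $q$, and hence, closing under extensions, the whole variety $\var{G}_q$; as free groups are residually-$q$, the free group $K$ is residually-$\var{V}$, i.e.\ $\{1\}$ is $\var{V}$-closed in the pro-$\var{V}$ topology of $K$.

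Next I would use the free-factor structure through the retraction $\rho\colon K=H*L\to H$ that kills $L$, a homomorphism with $\mathrm{Fix}(\rho)=\mathrm{Im}(\rho)=H$. The clean observation is that $\rho$, being a homomorphism, is continuous for the pro-$\var{V}$ topology of $K$, so the map $\eta\colon x\mapsto \rho(x)x^{-1}$ is continuous on the topological group $K$ and satisfies $\eta^{-1}(1)=H$; since $\{1\}$ is closed by the residual finiteness just established, $H$ is closed in the pro-$\var{V}$ topology of $K$. Concretely, for each $g\in K\setminus H$ this yields a normal subgroup $N\lhd K$ with $K/N\in\var{V}$ and $g\notin HN$.

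The remaining, and hardest, step is to convert such a quotient of $K$ into a homomorphism from $F$ into $\var{V}$. Given $N\lhd K$ with $K/N\in\var{V}$, let $M=\bigcap_{g\in F}gNg^{-1}$ be the core of $N$ in $F$; then $M\lhd F$ has finite index, $M\le N\le K$, and I claim $F/M\in\var{V}$. Writing $K_F$ for the core of $K$ in $F$ one has $F/K_F\in\var{V}$ (a quotient of $F/N_0$); setting $P=N\cap K_F$, each conjugate quotient $K_F/gPg^{-1}\cong K_F/P\hookrightarrow K/N$ lies in $\var{V}$, and since $M=\bigcap_{g}gPg^{-1}$ is a finite intersection, $K_F/M$ embeds in a finite product of these and so lies in $\var{V}$. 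Now $K_F/M\lhd F/M$ with quotient $F/K_F\in\var{V}$, so extension-closedness forces $F/M\in\var{V}$. Finally, for $g\in K\setminus H$ we have $g\notin HN\supseteq HM$, so $F\to F/M$ separates $g$ from $H$, while for $g\in F\setminus K$ the $\var{V}$-closed subgroup $K$ separates $g$ from $H$; hence $Cl_{\var{V}}(H)=H$. I expect this core computation---verifying $F/M\in\var{V}$, equivalently that the pro-$\var{V}$ topology of $F$ induces on the finite-index subgroup $K$ its own pro-$\var{V}$ topology---to be the main obstacle, and the one place (beyond residual finiteness) where extension-closedness is genuinely used.
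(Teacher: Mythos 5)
The paper does not prove this proposition at all: it is quoted verbatim from Weil \cite{Weil98}, Proposition~2.9, so there is no in-house argument to compare against. Your proof is correct and self-contained, and its easy implications $(1)\Rightarrow(2)$ (Proposition~\ref{ctopc}) and $(2)\Rightarrow(3)$ (definition of preclosed plus Proposition~\ref{morbasic}(3)) are exactly the steps the paper uses for the parallel result, Proposition~\ref{cpclfhall}. The real content is your $(3)\Rightarrow(1)$, and it is worth noting that the core computation you carry out there---showing that for $N\lhd K$ with $K/N\in\var{V}$ the $F$-core $M$ of $N$ satisfies $F/M\in\var{V}$, i.e.\ that the pro-$\var{V}$ topology of $F$ restricts on the clopen subgroup $K$ to the pro-$\var{V}$ topology of $K$---is precisely \cite{MSW01}, Proposition~1.6, which this paper invokes separately in Section~3.1.3. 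Your embedding $K_F/M\hookrightarrow\prod_g K_F/gPg^{-1}$ followed by the extension $1\to K_F/M\to F/M\to F/K_F\to 1$ is the standard way to prove that fact, and your retraction/equalizer argument ($\eta(x)=\rho(x)x^{-1}$ continuous, $\eta^{-1}(1)=H$, $\{1\}$ closed by residual $\var{G}_q$-ness) is the standard way to see that a free factor is closed once the ambient group is residually $\var{V}$; both uses of extension-closedness are genuinely needed and correctly located.

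One small caveat on the statement rather than the argument: for the trivial variety $\var{V}=\{1\}$ (which is vacuously extension-closed) the equivalence actually fails---$\langle a\rangle\le F(a,b)$ is $\var{V}$-preclosed and a free factor of the only $\var{V}$-open subgroup $F$, yet not $\var{V}$-closed---so your parenthetical ``all three conditions degenerate'' is not quite accurate; one must simply assume $\var{V}$ nontrivial, as Weil implicitly does and as you do in substance when you extract $\Z/q\in\var{V}$. This does not affect any application in the paper.
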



A similar result like Proposition \ref{cpclf} also holds  for a variety with the M. Hall property or shortly called  a Hall variety defined below.

\begin{defn}[\cite{AS}, Section 3.5]\label{dhall}
A variety $\var{V}$ is said to be a Hall variety if for each finite alphabet $A$ and each $\var{V}$-open subgroup $K$ of $F(A)$, each Schreier free factor of $K$ is $\var{V}$-closed in $F(A)$.
\end{defn}

\begin{prop}\label{cpclfhall}
Assume $\var{V}$ is a Hall variety and let $H$ be a finitely generated subgroup of the free group $F(A)$. Then the following are equivalent:

(1) $H$ is $\var{V}$-closed;

(2) $H$ is $\var{V}$-preclosed;

(3) $H$ is a Schreier free factor of a $\var{V}$-open subgroup.
\end{prop}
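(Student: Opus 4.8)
The plan is to establish the cycle of implications $(1)\Rightarrow(2)\Rightarrow(3)\Rightarrow(1)$, with each arrow reducing directly to a result already recorded in the excerpt. This mirrors the structure of Proposition \ref{cpclf}, the only substantive change being that ``free factor'' is replaced throughout by ``Schreier free factor'' and the extension-closed hypothesis is replaced by the Hall property; this replacement is exactly what makes the closing implication work.

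First I would dispatch $(1)\Rightarrow(2)$ by citing Proposition \ref{ctopc}, which asserts that every $\var{V}$-closed subgroup of the free group is $\var{V}$-preclosed for an arbitrary variety $\var{V}$; no special property of $\var{V}$ is needed here. Next, for $(2)\Rightarrow(3)$, I would unwind the definition of $\var{V}$-preclosed: there is a $\var{V}$-open subgroup $K$ containing $H$ for which the natural morphism $\phi_{H,K}\colon\Gamma_A(H)\to\Gamma_A(K)$ is injective. By Remark \ref{mSchf} (which rests on Lemma \ref{Schf}), injectivity of $\phi_{H,K}$ means the image of $\Gamma_A(H)$ is a reduced $A$-labeled subgraph of $\Gamma_A(K)$, whence $H$ is a Schreier free factor of the $\var{V}$-open subgroup $K$. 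That is precisely statement (3).

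Finally, the implication $(3)\Rightarrow(1)$ is where the Hall hypothesis enters, and it is the only place it is used. If $H$ is a Schreier free factor of a $\var{V}$-open subgroup $K$ of $F(A)$, then Definition \ref{dhall} applies verbatim: since $\var{V}$ is a Hall variety, every Schreier free factor of the $\var{V}$-open subgroup $K$ is $\var{V}$-closed in $F(A)$, and in particular $H$ is $\var{V}$-closed. This completes the cycle.

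I do not expect a genuine obstacle in this argument, since all the analytic content has been pre-packaged in the cited results. The only point demanding care is recognizing that statement (3) is literally the hypothesis appearing in Definition \ref{dhall}, so that $(3)\Rightarrow(1)$ is immediate rather than requiring any further manipulation; the remaining two implications are the general Proposition \ref{ctopc} and the graph-theoretic Remark \ref{mSchf}, respectively. It is worth emphasizing in the write-up that, unlike the extension-closed case of Proposition \ref{cpclf}, one genuinely needs the \emph{Schreier} refinement of ``free factor'' here, because the Hall property is phrased in terms of Schreier free factors and $\var{V}$-preclosedness only furnishes a Schreier free factor (via Remark \ref{mSchf}), not an arbitrary one.
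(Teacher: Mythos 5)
Your argument is correct and is essentially identical to the paper's proof, which establishes the same cycle $(1)\Rightarrow(2)\Rightarrow(3)\Rightarrow(1)$ by citing Proposition \ref{ctopc}, Remark \ref{mSchf}, and Definition \ref{dhall} respectively. Your additional remarks on why the Schreier refinement is needed are accurate but not required for the proof itself.
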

\begin{proof}
(3)$\Rightarrow$(1) is by Definition \ref{dhall}. (1)$\Rightarrow$(2) is by Proposition \ref{ctopc}. (2)$\Rightarrow$(3) is by Remark \ref{mSchf}.
\end{proof}

\section{Supersolvable closures of subgroups}

\subsection{$\var{H}_p$-closures of subgroups}\label{t1}

\subsubsection{$\var{H}_p$-closures are overgroups}

In \cite{AS}, the authors gave a complete classification of varieties of supersolvable groups those are Hall varieties. They proved

\begin{thm}[\cite{AS}, Theorem 1.1]
For a variety $\var{V}$ of finite supersolvable groups, then $\var{V}$ is a Hall variety if and only if $\var{V}=\var{G}_p * \var{Ab}_d$ for some prime $p$ and some positive integer $d~|~p-1$.
\end{thm}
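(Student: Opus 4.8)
The plan is to prove both implications, reading the arithmetic condition $d\mid p-1$ as the statement that every prime dividing $d$ divides $p-1$; equivalently $\gcd(d,p)=1$ and the cyclic group of order $p$ admits an automorphism of order $d$. Under this hypothesis every $G\in\var{G}_p*\var{Ab}_d$ has a \emph{normal} Sylow $p$-subgroup $P$ with $G/P$ abelian of exponent dividing $d$, and since $|P|$ and $|G/P|$ are coprime the extension splits, $G=P\rtimes A$. This splitting is the structural engine of the sufficiency direction, where the other main ingredient is that $\var{G}_p$ is extension-closed, hence by Proposition \ref{cpclf} its finitely generated closed subgroups are exactly the free factors of its open subgroups (the Ribes--Zalesskii setting). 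The necessity direction is a rigidity argument: the Hall property, via Proposition \ref{cpclfhall}, is so restrictive that a supersolvable variety satisfying it is forced into precisely this shape.

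For sufficiency I would verify Definition \ref{dhall} directly. Let $K\leq F(A)$ be $\var{V}$-open and let $H$ be a Schreier free factor of $K$; I must show $Cl_{\var{V}}(H)=H$, i.e.\ that every $g\notin H$ is separated from $H$ by some homomorphism into $\var{V}$. I would construct such a homomorphism in two stages matched to the two layers of the product $\var{G}_p*\var{Ab}_d$: first an $\var{Ab}_d$-quotient handling the $p'$-abelian data of $g$, and then, over its kernel, a pro-$p$ homomorphism separating $g$ from $H$ supplied by the extension-closedness of $\var{G}_p$ together with the Ribes--Zalesskii finiteness theorem. The point of $d\mid p-1$ is exactly that it lets these two stages be assembled into a \emph{single} group of $\var{V}$: because $\gcd(d,p)=1$ the abelian action of exponent dividing $d$ is realized faithfully inside the automorphisms of the $p$-group, so the relevant $P\rtimes A$ lies in $\var{V}$ and carries both separations at once. (Note one cannot simply cite ``$\var{Ab}_d$ is Hall,'' since by the theorem a purely abelian variety of exponent $d>1$ is not of the required form; it is the presence of the full $\var{G}_p$ that supplies the necessary flexibility.)

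For necessity, suppose $\var{V}$ is a Hall variety of finite supersolvable groups. Since its members are supersolvable, their chief factors are cyclic of prime order and the group acts on each factor $C_q$ through $\aut(C_q)=C_{q-1}$. I would first show there is a \emph{unique} prime $p$ with $\var{G}_p\subseteq\var{V}$: if two such primes existed, or if none did while $\var{V}$ were nontrivial, one could exhibit a $\var{V}$-open subgroup with a Schreier free factor that fails to be $\var{V}$-closed, contradicting Proposition \ref{cpclfhall}. Next I would show every other prime contributes only to an abelian quotient of bounded exponent, giving $\var{V}\subseteq\var{G}_p*\var{Ab}_d$ with $p\nmid d$, and that $\var{Ab}_d\subseteq\var{V}$ forces equality. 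The step I expect to be the main obstacle is deriving the action condition $d\mid p-1$. If some prime $q\mid d$ did not divide $p-1$, a $q$-element could not act nontrivially on a $C_p$ chief factor, so the relevant groups in $\var{V}$ would be forced into a direct-product-like shape, and I would need to build an explicit $\var{V}$-open subgroup whose Stallings graph contains a subgraph---a Schreier free factor---whose associated subgroup is not $\var{V}$-closed. Producing this counterexample demands a precise analysis of how reduced $A$-labeled graphs interact with the pro-$\var{V}$ topology exactly when the automorphism-order constraint is violated, and it is here that the supersolvable hypothesis and the graph-theoretic machinery of Section \ref{sp3} must be combined most delicately.
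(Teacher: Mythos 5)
This statement is quoted by the paper directly from Auinger--Steinberg (\cite{AS}, Theorem 1.1) and is not proved here at all, so there is no in-paper argument to compare against; the only question is whether your sketch would stand on its own, and it would not. First, a concrete error: you propose to read $d\mid p-1$ as ``every prime dividing $d$ divides $p-1$,'' but these are not equivalent (take $d=9$, $p=7$: the only prime dividing $9$ is $3$, which divides $6$, yet $9\nmid 6$), and they are not equivalent to your own second gloss either --- $\aut(C_p)\cong C_{p-1}$ has an element of order $d$ precisely when $d\mid p-1$, full stop. Since the whole point of the theorem is that the exact divisibility $d\mid p-1$ is the dividing line between Hall and non-Hall, starting from a weaker reading of the hypothesis would derail both directions.

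Second, and more fundamentally, both halves of your plan defer exactly the steps that constitute the proof. For sufficiency, the assertion that the $\var{Ab}_d$-separation and the $\var{G}_p$-separation ``can be assembled into a single group $P\rtimes A$ of $\var{V}$'' is the entire content of the hard direction: one must produce a $p$-group $P$ on which the prescribed $\var{Ab}_d$-quotient acts compatibly with the given Schreier free factor, and verify that the resulting semidirect product still separates $g$ from $H$; nothing in the coprimality of $d$ and $p$ makes this automatic, and the relative-topology issue (whether $H\cap N$ is $\var{G}_p$-closed in the kernel $N$ of the $\var{Ab}_d$-quotient in a way compatible with the restriction of the topology of $F$) is not addressed. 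For necessity, you explicitly acknowledge that you do not know how to construct the $\var{V}$-open subgroup with a non-closed Schreier free factor when $d\nmid p-1$; that construction is the substance of the argument, not a detail to be filled in. As written, the proposal is a statement of intent with the two essential mechanisms missing, so it cannot be accepted as a proof of the cited theorem.
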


\begin{cor}\label{hpclo}
Let $H$ be a finitely generated subgroup of $F(A)$, then the $\var{H}_p$-closure of $H$ is an overgroup of $H$, namely $Cl_{\var{H}_p}(H)$ lies in $\mathcal{O}_A(H)$.
\end{cor}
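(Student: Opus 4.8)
The plan is to recognize $\var{H}_p$ as a Hall variety and then identify the $\var{H}_p$-closure of $H$ with its $\var{H}_p$-preclosure, which is already known to be an overgroup.

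First I would check that $\var{H}_p$ satisfies the hypotheses of \cite{AS} Theorem 1.1. By the Remark in Section \ref{sp1}, $\var{H}_p=\var{G}_p*\var{Ab}_{p-1}$ is a variety of finite supersolvable groups, and since $p-1\mid p-1$ the cited theorem applies and shows $\var{H}_p$ is a Hall variety. This is the only place where the classification of Hall varieties enters, and it lets me use Proposition \ref{cpclfhall}: for finitely generated subgroups of $F(A)$, being $\var{H}_p$-closed is equivalent to being $\var{H}_p$-preclosed.

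Let $P$ denote the $\var{H}_p$-preclosure of $H$. By Proposition \ref{preover}, $P\in\mathcal{O}_A(H)$; in particular $P$ is finitely generated. The heart of the argument is to prove $Cl_{\var{H}_p}(H)=P$, after which the corollary is immediate. I would establish the two inclusions separately, being careful to invoke Proposition \ref{cpclfhall} only for subgroups already known to be finitely generated. For $P\subseteq Cl_{\var{H}_p}(H)$: every $\var{H}_p$-open subgroup $K$ is finitely generated (it has finite index in $F(A)$) and is trivially $\var{H}_p$-preclosed, taking $K$ itself as the witnessing open subgroup so that $\phi_{K,K}=\mathrm{id}$ is injective. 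Hence each $\var{H}_p$-open $K\supseteq H$ occurs in the intersection defining the preclosure, so $P\subseteq K$; intersecting over all such $K$ and using \cite{MSW01} Proposition 1.3 gives $P\subseteq\bigcap K=Cl_{\var{H}_p}(H)$. For $Cl_{\var{H}_p}(H)\subseteq P$: any $\var{H}_p$-preclosed subgroup $M\supseteq H$ is finitely generated, so by Proposition \ref{cpclfhall} it is $\var{H}_p$-closed and therefore contains the smallest $\var{H}_p$-closed subgroup $Cl_{\var{H}_p}(H)$ containing $H$; intersecting over all such $M$ gives $Cl_{\var{H}_p}(H)\subseteq P$. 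Combining, $Cl_{\var{H}_p}(H)=P\in\mathcal{O}_A(H)$.

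The step I expect to be most delicate is not a computation but a matter of hygiene about finite generation. The notions of ``preclosed'' and ``overgroup'' and the equivalence of Proposition \ref{cpclfhall} are formulated only for finitely generated subgroups via the Stallings graph $\Gamma_A(\cdot)$, whereas the closure $Cl_{\var{H}_p}(H)$ is not a priori finitely generated---indeed its finite generation is essentially the content of this corollary. The two-inclusion argument above is arranged precisely to avoid ever asserting that $Cl_{\var{H}_p}(H)$ itself is preclosed: I only apply the Hall equivalence to the $\var{H}_p$-open subgroups and to the $\var{H}_p$-preclosed subgroups $M$ (all finitely generated), and I characterize $Cl_{\var{H}_p}(H)$ purely as the intersection of open subgroups via \cite{MSW01} Proposition 1.3. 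An alternative, shorter route would be to argue that $P$ is itself $\var{H}_p$-preclosed, conclude via Proposition \ref{cpclfhall} that $P$ is $\var{H}_p$-closed, and deduce $Cl_{\var{H}_p}(H)\subseteq P$ directly; but this requires knowing the preclosure is preclosed, which I would rather sidestep with the intersection bookkeeping above.
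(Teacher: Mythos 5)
Your proposal is correct and follows essentially the same route as the paper: invoke the Auinger--Steinberg classification to see that $\var{H}_p=\var{G}_p*\var{Ab}_{p-1}$ is a Hall variety, identify $Cl_{\var{H}_p}(H)$ with the $\var{H}_p$-preclosure via Proposition \ref{cpclfhall}, and conclude with Proposition \ref{preover}. The paper asserts the identification of closure and preclosure in a single sentence, whereas your two-inclusion argument spells out why that identification is legitimate without presupposing that $Cl_{\var{H}_p}(H)$ is finitely generated; this is a careful elaboration of the same proof rather than a different one.
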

\begin{proof}
By Proposition \ref{cpclfhall} $Cl_{\var{H}_p}(H)$ is exactly the $\var{H}_p$-preclosure of $H$ and by Proposition \ref{preover} it is an overgroup of $H$.
\end{proof}

\subsubsection{Deciding $\var{H}_p$-denseness}


\begin{defn}
We call $(G,S)$ an \emph{$n$-marked group} if $S$ is a generating set of $G$ containing $n$ elements. A homomorphism between two $n$-marked groups $(G, s_1,...,s_n)$, $(G', s'_1,...,s'_n)$ is a group homomorphism $\phi: G\to G'$ such that $\phi(s_i)=s'_i$ for each $i=1,...,n$.
\end{defn}

\begin{defn}
Let $\var{V}$ be a variety and $F=F(A)$ the free group over a finite alphabet $A$. We call $F_\var{V}(A)$ a \emph{finite free object} in $\var{V}$ over $A$ if $F_\var{V}(A)$ is in $\var{V}$ and
there exists a homomorphism $\sigma: (F,A)\to (F_\var{V}(A), \sigma(A))$  called the canonical projection satisfying the following universal property: for any $\psi: (F,A)\to (G,S)\in\var{V}$, there exists $\hat{\psi}: (F_\var{V}(A), \sigma(A))\to (G,S)$ such that the following diagram commutes.
\end{defn}

\begin{center}
\begin{tikzcd}
		{(F(A), A)} && {(G, S)} \\
		\\
		{(F_\var{V}(A),\sigma(A))}
		\arrow["\sigma"', from=1-1, to=3-1]
		\arrow["\psi", from=1-1, to=1-3]
		\arrow["{\hat{\psi}}"', dashed, from=3-1, to=1-3]
\end{tikzcd}
\end{center}

\begin{rem}
In \cite{MSW01} Section 2, the authors described the free object over $A$ with respect to $\var{V}$. When $\var{V}$ has a finite free object over $A$, it is exactly the $F_\var{V}(A)$ defined above.
\end{rem}

\begin{exam}\label{abm}
For the variety $\var{Ab}_m$, $F_{\var{Ab}_m}(A)$ is isomorphic to $(\Z/m\Z)^n$ (usually denoted by $(\Z/m\Z)^A$) where $n$ is the cardinality of $A$. The kernel of the canonical projection $\sigma: (F,A)\to (F_{\var{Ab}_m}(A), \sigma(A))$ is $[F,F]F^m$.
\end{exam}

In \cite{MSW01}, Margolis-Sapir-Weil proved the following Lemma  \ref{gpabp} and Corollary \ref{pdec} stating that $\var{G}_p$-denseness is equivalent to $\var{Ab}_p$-denseness and decidable. 

\begin{lem}[\cite{MSW01}, Corollary 3.2]\label{gpabp}
Let $H$ be a finitely generated subgroup of $F=F(A)$, and let
$\sigma: F(A)\to (\Z/p\Z)^A$ be the canonical projection. The following are equivalent.

(1) $H$ is $\var{G}_p$-dense in $F$;

(2) $H$ is $\var{Ab}_p$-dense in $F$;

(3) $\sigma^{-1}\sigma(H)=F$;

(4) $\sigma(H)=(\Z/p\Z)^A$.
\end{lem}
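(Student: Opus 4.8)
The plan is to run through the four conditions by combining the abstract density criterion of Corollary~\ref{dense} with the universal property of the canonical projection $\sigma$ and the Frattini-subgroup machinery for finite $p$-groups. First I would dispose of the routine equivalences. The equivalence (3)$\Leftrightarrow$(4) is immediate: since $\sigma$ is surjective we have $\sigma(F)=(\Z/p\Z)^A$, so $\sigma^{-1}\sigma(H)=F$ holds exactly when $\sigma(H)=\sigma(F)=(\Z/p\Z)^A$. For (2)$\Leftrightarrow$(4) I would invoke Corollary~\ref{dense}: $H$ is $\var{Ab}_p$-dense precisely when $\phi_s(H)=\phi_s(F)$ for every surjective $\phi_s\in\Phi_{\var{Ab}_p}$. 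Because $(\Z/p\Z)^A=F_{\var{Ab}_p}(A)$ is the finite free object (Example~\ref{abm}), every such $\phi_s$ factors as $\phi_s=\hat\phi_s\circ\sigma$; hence $\sigma(H)=\sigma(F)$ forces $\phi_s(H)=\hat\phi_s(\sigma(H))=\hat\phi_s(\sigma(F))=\phi_s(F)$ for all $\phi_s$, giving (4)$\Rightarrow$(2), while taking $\phi_s=\sigma$ gives (2)$\Rightarrow$(4). Finally (1)$\Rightarrow$(2) is a direct application of Lemma~\ref{subd}(1), since $\var{Ab}_p$ consists of elementary abelian $p$-groups and is therefore a subvariety of $\var{G}_p$.

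The substantive implication is (2)$\Rightarrow$(1), and this is where I expect the real work to lie. Assuming $H$ is $\var{Ab}_p$-dense, I would verify the criterion of Corollary~\ref{dense} for $\var{G}_p$: let $\phi_s\in\Phi_{\var{G}_p}$ be any surjective homomorphism onto a finite $p$-group $G$, and the goal is $\phi_s(H)=G$. The idea is to pass to the Frattini quotient $G/\Phi(G)$, which is an elementary abelian $p$-group and hence lies in $\var{Ab}_p$. Composing with the quotient map $\pi:G\to G/\Phi(G)$ yields a surjection $\pi\circ\phi_s\in\Phi_{\var{Ab}_p}$, so the density criterion of Corollary~\ref{dense} applied to $\pi\circ\phi_s$ gives $\pi(\phi_s(H))=G/\Phi(G)$, equivalently $\phi_s(H)\cdot\Phi(G)=G$.

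The final step is the Burnside basis theorem, or equivalently the defining property of $\Phi(G)$ as the collection of non-generators of $G$: any subgroup $K\leq G$ with $K\Phi(G)=G$ must satisfy $K=G$. Applying this with $K=\phi_s(H)$ yields $\phi_s(H)=G=\phi_s(F)$, so $H$ is $\var{G}_p$-dense by Corollary~\ref{dense}, which closes the cycle. The main obstacle is thus not in the density bookkeeping but in recognizing that $\var{Ab}_p$-density controls $\var{G}_p$-density only through the Frattini quotient; once the reduction to $G/\Phi(G)$ is in place, the non-generator property of $\Phi(G)$ finishes the argument.
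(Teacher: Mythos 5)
Your proof is correct. The paper itself gives no proof of this lemma (it is quoted from [MSW01, Corollary 3.2]), but your argument --- routine reductions via the universal property of $\sigma$ for (2)$\Leftrightarrow$(3)$\Leftrightarrow$(4), plus the Frattini-quotient and non-generator argument for (2)$\Rightarrow$(1) --- is the standard one and is precisely the technique the authors themselves deploy in their proof of the analogous statement for $\var{H}_p$ (Lemma \ref{hptoa}), so it matches the paper's approach in all essentials.
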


\begin{rem}\label{pdense}
Let $N$ be the kernel of $\sigma$, that is $N=[F,F]F^p$, then $H$ satisfies the above condition (3) if and only if $H\cdot N=F$.
\end{rem}

\begin{cor}[\cite{MSW01}, Corollary 3.4]\label{pdec}
It is decidable whether $H$ is $\var{G}_p$-dense.
\end{cor}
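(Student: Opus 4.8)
The plan is to reduce the question to a single finite computation by means of Lemma \ref{gpabp}. The essential difficulty with $\var{G}_p$-denseness is that $\var{G}_p$ contains $p$-groups of unbounded order, so it has no finite free object over $A$ and one cannot test denseness against a single fixed group via Corollary \ref{dense} directly. Lemma \ref{gpabp} removes exactly this difficulty: it asserts that $H$ is $\var{G}_p$-dense if and only if $\sigma(H)=(\Z/p\Z)^A$, where $\sigma\colon F(A)\to (\Z/p\Z)^A$ is the canonical projection of Example \ref{abm}. Thus the infinite family of finite $p$-groups is replaced by the single finite target $(\Z/p\Z)^A$.

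Granting this reduction, I would argue as follows. Fix a finite generating set $h_1,\dots,h_k$ of $H$ (for instance the given generators, or the Schreier basis read off from $\Gamma_A(H)$). The map $\sigma$ is abelianization followed by reduction modulo $p$, so for a word $w$ the image $\sigma(w)$ is the vector whose $a$-th coordinate ($a\in A$) is the exponent sum of $a$ in $w$ taken modulo $p$; each $\sigma(h_i)$ is therefore computed by a single pass over the word $h_i$. Identifying $(\Z/p\Z)^A$ with the $\Z/p\Z$-vector space of dimension $n=|A|$, the image $\sigma(H)$ is precisely the subspace spanned by $\sigma(h_1),\dots,\sigma(h_k)$.

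The decision procedure is then pure linear algebra over the finite field $\Z/p\Z$: form the $k\times n$ matrix whose rows are the vectors $\sigma(h_i)$ and compute its rank by Gaussian elimination. By the previous paragraph $\sigma(H)=(\Z/p\Z)^A$ holds if and only if this rank equals $n$, and by Lemma \ref{gpabp} this in turn is equivalent to $H$ being $\var{G}_p$-dense. Since Gaussian elimination over $\Z/p\Z$ terminates, the procedure decides $\var{G}_p$-denseness. Equivalently, by Remark \ref{pdense} one may phrase the test as checking whether $H\cdot N=F$ for $N=[F,F]F^p$, which amounts to the same rank computation in the quotient $F/N\cong (\Z/p\Z)^A$.

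I expect no serious obstacle in this corollary: all of the real content is already packaged in Lemma \ref{gpabp}, whose role is to collapse the unbounded variety $\var{G}_p$ onto the single finite abelian quotient $(\Z/p\Z)^A$. Once that equivalence is in hand, the only remaining task is to recognize that surjectivity of a homomorphism onto a fixed finite-dimensional vector space is tested by a rank computation, which is manifestly effective.
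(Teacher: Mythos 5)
Your proposal is correct and follows essentially the same route as the cited source: the paper itself gives no proof but refers to \cite{MSW01}, Corollary 3.4, which likewise deduces decidability from the equivalence of $\var{G}_p$-denseness with $\var{Ab}_p$-denseness (Lemma \ref{gpabp}) and then tests surjectivity onto $(\Z/p\Z)^A$ by an effective rank computation. No gaps.
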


\begin{cor}[\cite{MSW01} Section 3.2 or \cite{RZ94} Theorem 4.2]\label{gpc}
Let $H$ be a finitely generated subgroup of $F$, then the $\var{G}_p$-closure of $H$ is computable.
\end{cor}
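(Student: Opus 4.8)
The plan is to reduce the computation to a finite search over the $A$-fringe $\mathcal{O}_A(H)$ and then to read off the closure using the already-established decidability of $\var{G}_p$-denseness (Corollary \ref{pdec}). First I would note that $\var{G}_p=\var{G}_p*\var{Ab}_1$ with $1\mid p-1$, so $\var{G}_p$ is a Hall variety by the classification theorem of \cite{AS}, and it is moreover extension-closed since an extension of a $p$-group by a $p$-group is again a $p$-group. Hence, exactly as in Corollary \ref{hpclo}, $Cl_{\var{G}_p}(H)$ coincides with the $\var{G}_p$-preclosure of $H$ and therefore lies in $\mathcal{O}_A(H)$ by Proposition \ref{preover}. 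Since $\mathcal{O}_A(H)$ is finite and its members are effectively produced from $\Gamma_A(H)$ (identify vertices in all possible ways, then fold), the closure is one of finitely many explicitly computable overgroups; it remains to single it out.

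To identify it, I would characterize $Cl_{\var{G}_p}(H)$ among the overgroups by a denseness condition. For an overgroup $K$, the closure of $H$ in the subspace topology of $K$ equals $Cl_{\var{G}_p}(H)\cap K$, so $H$ is $\var{G}_p$-dense in $K$ (with the subspace topology) if and only if $K\subseteq Cl_{\var{G}_p}(H)$. Consequently the overgroups in which $H$ is dense are precisely those contained in $Cl_{\var{G}_p}(H)$, and since $Cl_{\var{G}_p}(H)$ is itself such an overgroup, it is the unique maximal one. Thus, once I can test denseness of $H$ in each overgroup $K$, the algorithm simply outputs the largest overgroup passing the test.

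The remaining point, and the one I expect to be the crux, is to make the test ``$H$ is $\var{G}_p$-dense in $K$'' effective. Here I would use that $\var{G}_p$ is extension-closed to conclude that the pro-$\var{G}_p$ topology of $F$ induces on the finitely generated subgroup $K$ exactly the intrinsic pro-$\var{G}_p$ topology of the free group $K$; this is the technical heart of the argument and is what legitimizes passing from the ambient topology to an intrinsic one. Granting this, $K$ is a free group of finite rank whose Schreier basis is read off from $\Gamma_A(K)$, the generators of $H$ are rewritten in this basis, and the intrinsic denseness of $H$ in $K$ is decided by applying Corollary \ref{pdec} to the pair $(H,K)$. Running this decision procedure over the finitely many overgroups and returning the maximal affirmative one yields $Cl_{\var{G}_p}(H)$, which is therefore computable. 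The main obstacle is thus not the finite search but the topological reduction of relative denseness to intrinsic denseness; everything else is bookkeeping on the graphs $\Gamma_A(\cdot)$ together with Corollary \ref{pdec}.
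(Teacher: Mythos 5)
The paper does not actually prove this corollary; it is imported wholesale from \cite{MSW01} (Section 3.2) and \cite{RZ94} (Theorem 4.2), so there is no internal argument to compare against. Your proposal is a genuine proof, and its overall shape is sound: $\var{G}_p=\var{G}_p*\var{Ab}_1$ is a Hall (and extension-closed) variety, so $Cl_{\var{G}_p}(H)$ is the $\var{G}_p$-preclosure and hence lies in the effectively enumerable finite fringe $\mathcal{O}_A(H)$; the closure is then the maximum overgroup in which $H$ is dense, and inclusion among overgroups is decidable from the graphs. This is a legitimately different route from the cited sources, which give iterative vertex-identification algorithms on $\Gamma_A(H)$ (polynomial time in \cite{MSW01}); your brute-force search over the fringe is less efficient but entirely adequate for a computability statement.

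The step you correctly flag as the crux --- replacing ``$H$ is dense in $K$ for the topology induced from $F$'' by ``$H$ is dense in $K$ for the intrinsic pro-$\var{G}_p$ topology of the free group $K$'' for an \emph{arbitrary} overgroup $K$ --- is the only place where you lean on more than the paper supplies: the paper quotes \cite{MSW01}, Proposition 1.6 only for \emph{open} subgroups $K$, whereas a general overgroup of $H$ need not be $\var{G}_p$-open. The full statement for arbitrary finitely generated subgroups of an extension-closed variety is indeed in the cited literature, so your appeal is legitimate; but it is worth noting that your algorithm does not actually need it. The induced topology on any subgroup $K$ is always coarser than its intrinsic pro-$\var{G}_p$ topology (since $K/(K\cap N)$ embeds in $F/N$), so intrinsic denseness implies subspace denseness, and hence every overgroup passing your test is contained in $Cl_{\var{G}_p}(H)$. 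Conversely $Cl_{\var{G}_p}(H)$ itself passes the test: being $\var{G}_p$-closed it is a free factor of a $\var{G}_p$-open subgroup $U$ by Proposition \ref{cpclf}, the topology of $F$ induces on $U$ its full pro-$\var{G}_p$ topology by the open case of \cite{MSW01}, Proposition 1.6, and $U$ induces the full topology on its free factor via the retraction $U\to Cl_{\var{G}_p}(H)$. Hence the unique maximum affirmative overgroup is $Cl_{\var{G}_p}(H)$ in any case, and your algorithm is correct even if only the open case of the topology-coincidence result is granted. One last bookkeeping point: you should say ``maximum'' rather than ``maximal'' and observe that the affirmative set does have a maximum (namely the closure, which contains all other affirmatives); with that, the argument is complete.
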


There is a similar relationship between $\var{H}_p$ and its subvariety $\var{Ab}_p*\var{Ab}_{p-1}$ like that in Lemma \ref{gpabp}. We will prove  $\var{H}_p$-denseness is equivalent to $\var{Ab}_p*\var{Ab}_{p-1}$-denseness (Theorem \ref{hpdense}) and decidable (Theorem \ref{hpd}).

A variety $\var{V}$ is called \emph{locally finite} if $F_\var{V}(A)$ exists for any finite alphabet $A$. For instance, the variety $\var{Ab}_m$ in Example \ref{abm}  is locally finite. The product of two locally finite varieties is also locally finite.

\begin{lem}\label{kernel}
The variety $\var{Ab}_p*\var{Ab}_{p-1}$ is locally finite. Moreover, the kernel of the canonical projection $\sigma: (F(A),A)\to (F_{\var{Ab}_p*\var{Ab}_{p-1}}(A),\sigma(A))$ is $[N_{p-1},N_{p-1}]N^p_{p-1}$ where $N_{p-1}=[F,F]F^{p-1}$.
\end{lem}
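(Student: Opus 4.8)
The plan is to dispose of local finiteness first and then pin down the kernel by a double inclusion. Since $F_{\var{Ab}_p}(A)$ and $F_{\var{Ab}_{p-1}}(A)$ are finite by Example \ref{abm}, both $\var{Ab}_p$ and $\var{Ab}_{p-1}$ are locally finite; as noted just before the statement, a product of locally finite varieties is locally finite, so $\var{Ab}_p*\var{Ab}_{p-1}$ is locally finite and the canonical projection $\sigma$ exists for every finite alphabet $A$. Throughout I write $\var{V}=\var{Ab}_p*\var{Ab}_{p-1}$, $N=N_{p-1}=[F,F]F^{p-1}$ and $L=[N,N]N^p$. Recall that $\ker\sigma$ coincides with the intersection $\bigcap_\psi\ker\psi$ over all homomorphisms $\psi\colon F\to G$ with $G\in\var{V}$: each such $\psi$ factors through $\sigma$ by the universal property, giving $\ker\sigma\subseteq\ker\psi$, while $\sigma$ is itself one such map, giving the reverse. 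Hence it suffices to prove $L\subseteq\ker\psi$ for every such $\psi$, and that $F/L$ itself lies in $\var{V}$ (so that the quotient map $F\to F/L$ is one of the $\psi$, forcing $\ker\sigma\subseteq L$).

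For the inclusion $L\subseteq\ker\sigma$, I fix an arbitrary $\psi\colon F\to G\in\var{V}$ and choose a normal subgroup $M\lhd G$ with $M\in\var{Ab}_p$ and $G/M\in\var{Ab}_{p-1}$. The composite $F\xrightarrow{\psi}G\to G/M$ maps $F$ into an abelian group of exponent dividing $p-1$, so it kills every commutator and every $(p-1)$-st power; thus $N=[F,F]F^{p-1}$ lands in $M$, i.e. $\psi(N)\subseteq M$. Since $M$ is abelian of exponent dividing $p$, the restriction of $\psi$ to $N$ kills $[N,N]$ and $N^p$, whence $\psi(L)=1$. As $\psi$ was arbitrary, $L\subseteq\ker\sigma$.

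For the reverse inclusion I verify directly that $F/L\in\var{V}$. Here $N\lhd F$ (it is fully invariant), so $N/L\lhd F/L$; moreover $[N,N]\subseteq L$ and $N^p\subseteq L$ make $N/L$ abelian of exponent dividing $p$, so $N/L\in\var{Ab}_p$, while $(F/L)/(N/L)\cong F/N\cong(\Z/(p-1)\Z)^A\in\var{Ab}_{p-1}$ by Example \ref{abm}. This exhibits $F/L$ as an extension of a group in $\var{Ab}_p$ by a group in $\var{Ab}_{p-1}$, so $F/L\in\var{V}$ provided it is finite.

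The step requiring the most care is precisely this finiteness of $F/L$, since a variety here consists of \emph{finite} groups and we need $F/L$ to be an honest member of $\var{V}$. Since $F/N\cong(\Z/(p-1)\Z)^A$ is finite, $N$ has finite index in $F$ and is therefore finitely generated by the Nielsen--Schreier theorem; consequently $N/L=N/([N,N]N^p)$ is a finitely generated abelian group of exponent dividing $p$, hence finite. Combined with the finiteness of $F/N$ this yields $|F/L|<\infty$, completing the proof that $F/L\in\var{V}$. Putting the two inclusions together gives $\ker\sigma=L=[N_{p-1},N_{p-1}]N_{p-1}^p$, as claimed.
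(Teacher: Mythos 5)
Your proof is correct, but it takes a genuinely different route from the paper, which does not argue the lemma at all: it simply cites Neumann (21.12, 21.13) and Auinger--Steinberg (p.~871), where the kernel of the canonical projection onto the relatively free group of a product variety is identified in general as the iterated verbal subgroup. Your argument reconstructs that fact from scratch in the special case $\var{Ab}_p*\var{Ab}_{p-1}$: the inclusion $L=[N,N]N^p\subseteq\ker\psi$ for every $\psi$ into a member of the variety (via $\psi(N)\subseteq M$ for a witnessing normal subgroup $M\in\var{Ab}_p$), plus the verification that $F/L$ is itself a finite member of the variety, which together pin down $\ker\sigma=L$. Both halves are sound, and you are right that the finiteness of $F/L$ is the step needing care; the reduction via Nielsen--Schreier (finite index $\Rightarrow$ $N$ finitely generated $\Rightarrow$ $N/L$ a finite elementary abelian $p$-group) is the standard way to see it. What your route buys is self-containedness and an explicit identification of $F_{\var{Ab}_p*\var{Ab}_{p-1}}(A)$ as $F/L$; what the citation buys is generality, since the same statement holds for any product of locally finite varieties. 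One small point worth adding before you form the quotient $F/L$: note that $L$ is normal in $F$ --- it is a verbal subgroup of the fully invariant subgroup $N$, hence itself fully invariant in $F$.
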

\begin{proof}
See \cite{Neu67} 21.12, 21.13 or \cite{AS} page 871.
\end{proof}

\begin{lem}\label{atot}
Let $H$ be a finitely generated subgroup of $F$ and let $N_{p-1}=[F,F]F^{p-1}$ where $p$ is a prime number. Then $Cl_{\var{Ab}_p*\var{Ab}_{p-1}}(H)=H\cdot [N_{p-1},N_{p-1}]N^p_{p-1}$.
\end{lem}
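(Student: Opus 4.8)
The plan is to reduce the computation of $Cl_{\var{V}}(H)$ for $\var{V}=\var{Ab}_p*\var{Ab}_{p-1}$ to the single canonical projection onto the finite free object. By Lemma \ref{kernel} the variety $\var{V}$ is locally finite, so $F_{\var{V}}(A)$ exists, and the kernel of the canonical projection $\sigma\colon(F(A),A)\to(F_{\var{V}}(A),\sigma(A))$ is exactly $M:=[N_{p-1},N_{p-1}]N^p_{p-1}$. Thus it suffices to prove $Cl_{\var{V}}(H)=\sigma^{-1}(\sigma(H))$ and then to observe $\sigma^{-1}(\sigma(H))=H\cdot M$.

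The second identity is the routine preimage-of-image computation: $x\in\sigma^{-1}(\sigma(H))$ means $\sigma(x)\in\sigma(H)$, i.e. $\sigma(h^{-1}x)=1$ for some $h\in H$, i.e. $h^{-1}x\in\ker\sigma=M$, which says precisely $x\in H\cdot M$. Hence $\sigma^{-1}(\sigma(H))=H\cdot M=H\cdot[N_{p-1},N_{p-1}]N^p_{p-1}$, the right-hand side of the Lemma.

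For the first identity I would invoke the description $Cl_{\var{V}}(H)=\bigcap_{\phi\in\Phi_{\var{V}}}\phi^{-1}(\phi(H))$ from \cite{MSW01}, Proposition 1.3. Since $F_{\var{V}}(A)\in\var{V}$, the projection $\sigma$ is itself a member of $\Phi_{\var{V}}$, whence the inclusion $Cl_{\var{V}}(H)\subseteq\sigma^{-1}(\sigma(H))$ is immediate. For the reverse inclusion, fix an arbitrary $\phi\in\Phi_{\var{V}}$, say $\phi\colon F\to G$ with $G\in\var{V}$; marking $\phi(F)$ by the tuple $\phi(A)$ makes $\phi$ a morphism of marked groups $(F,A)\to(\phi(F),\phi(A))$, so the universal property of $F_{\var{V}}(A)$ furnishes $\hat\phi$ with $\hat\phi\circ\sigma=\phi$. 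Then $\phi(H)=\hat\phi(\sigma(H))$, and if $\sigma(x)\in\sigma(H)$ we get $\phi(x)=\hat\phi(\sigma(x))\in\hat\phi(\sigma(H))=\phi(H)$, that is $x\in\phi^{-1}(\phi(H))$. Intersecting over all $\phi$ gives $\sigma^{-1}(\sigma(H))\subseteq Cl_{\var{V}}(H)$, completing the equality.

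The argument is almost entirely formal; the one point requiring care—and the only place the special structure of the variety enters—is the factorization of an arbitrary $\phi\in\Phi_{\var{V}}$ through $\sigma$. This rests on the existence of the finite free object, i.e. on the local finiteness asserted in Lemma \ref{kernel}, together with the mild bookkeeping of equipping the image group $\phi(F)$ with the marking $\phi(A)$ so that the universal property is applicable. Once this factorization is in hand, the collapse of the infinite intersection $\bigcap_{\phi}\phi^{-1}(\phi(H))$ to the single term $\sigma^{-1}(\sigma(H))$ is automatic.
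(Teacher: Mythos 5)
Your proposal is correct and follows essentially the same route as the paper: both identify $\ker\sigma=[N_{p-1},N_{p-1}]N^p_{p-1}$ via Lemma \ref{kernel}, note $\sigma\in\Phi_{\var{V}}$ for one inclusion, and use the universal property of $F_{\var{V}}(A)$ to factor an arbitrary $\phi\in\Phi_{\var{V}}$ through $\sigma$ for the reverse inclusion, concluding $Cl_{\var{V}}(H)=\sigma^{-1}(\sigma(H))=H\cdot\ker\sigma$. No gaps.
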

\begin{proof}
Let $A$ be a basis of $F$, then by Lemma \ref{kernel} the kernel of the canonical projection $\sigma: (F(A),A)\to (F_{\var{Ab}_p*\var{Ab}_{p-1}}(A),\sigma(A))$ is $[N_{p-1},N_{p-1}]N^p_{p-1}$.

Note that $F_{\var{Ab}_p*\var{Ab}_{p-1}}(A)$ is in the variety $\var{Ab}_p*\var{Ab}_{p-1}$ and $\sigma$ is in the set $\Phi_{\var{Ab}_p*\var{Ab}_{p-1}}$ of all homomorphisms from $F$ to groups in $\var{Ab}_p*\var{Ab}_{p-1}$, we have $$Cl_{\var{Ab}_p*\var{Ab}_{p-1}}(H)=\bigcap_{\phi\in \Phi_{\var{Ab}_p*\var{Ab}_{p-1}}} \phi^{-1}(\phi(H))\leq \sigma^{-1}(\sigma(H)).$$

For any homomorphism $\phi\in \Phi_{\var{Ab}_p*\var{Ab}_{p-1}}$, denote also by $\phi$ the induced surjective homomorphism $(F,A)\to (\phi(F),\phi(A))$. Since $F_{\var{Ab}_p*\var{Ab}_{p-1}}(A)$ is the free object in $\var{Ab}_p*\var{Ab}_{p-1}$ over $A$, there exists $\hat{\phi}: (F_{\var{Ab}_p*\var{Ab}_{p-1}}(A), \sigma(A))\to (\phi(F),\phi(A))$ such that $\phi=\hat{\phi}\circ \sigma$. Thus
$\sigma^{-1}(\sigma(H))\leq \sigma^{-1}(\hat{\phi}^{-1}(\hat{\phi}\circ\sigma(H)))= \phi^{-1}(\phi(H))$, then $$\sigma^{-1} (\sigma(H))\leq \displaystyle \bigcap_{\phi\in \Phi_{\var{Ab}_p*\var{Ab}_{p-1}}} \phi^{-1}(\phi(H))=Cl_{\var{Ab}_p*\var{Ab}_{p-1}}(H).$$

So we have $Cl_{\var{Ab}_p*\var{Ab}_{p-1}}(H)=\sigma^{-1} (\sigma(H))=H\cdot [N_{p-1},N_{p-1}]N^p_{p-1}$.
\end{proof}


\begin{lem}\label{hptoa}
Let $H$ be a finitely generated subgroup of $F$, then $H$ is $\var{H}_p$-dense in $F$ if and only if $H$ is $\var{Ab}_p*\var{Ab}_{p-1}$-dense in $F$.
\end{lem}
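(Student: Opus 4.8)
The plan is to prove the two implications separately: the forward one is immediate from the lattice of varieties, while the reverse requires a Frattini-type generation argument. Throughout I would reduce denseness to the behaviour on surjections via Corollary \ref{dense}, so that ``$\var{W}$-dense'' means ``$\phi(H)=\phi(F)$ for every surjective $\phi$ onto a group in $\var{W}$''.

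For the easy direction, observe that $\var{Ab}_p\subset\var{G}_p$, so $\var{Ab}_p*\var{Ab}_{p-1}$ is a subvariety of $\var{H}_p=\var{G}_p*\var{Ab}_{p-1}$. Hence if $H$ is $\var{H}_p$-dense, then $H$ is $\var{Ab}_p*\var{Ab}_{p-1}$-dense directly by Lemma \ref{subd}(1). For the reverse direction I would argue by contraposition. Suppose $H$ is not $\var{H}_p$-dense; then there is a surjective $\phi\colon F\to G$ with $G\in\var{H}_p$ and $\phi(H)\neq G$. By definition of $\var{H}_p=\var{G}_p*\var{Ab}_{p-1}$, the group $G$ has a normal $p$-subgroup $N$ with $G/N\in\var{Ab}_{p-1}$; since $\gcd(p,p-1)=1$, this $N$ is in fact the unique (normal) Sylow $p$-subgroup of $G$. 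The goal is to manufacture from $\phi$ a surjection onto a group in $\var{Ab}_p*\var{Ab}_{p-1}$ on which $H$ still fails to be dense, contradicting the hypothesis.

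The candidate quotient is $G/\Phi(N)$, where $\Phi(N)=[N,N]N^p$ is the Frattini subgroup of $N$. It is characteristic in $N$ and therefore normal in $G$; moreover $N/\Phi(N)$ is elementary abelian and so lies in $\var{Ab}_p$, while $(G/\Phi(N))/(N/\Phi(N))\cong G/N\in\var{Ab}_{p-1}$. Hence $G/\Phi(N)\in\var{Ab}_p*\var{Ab}_{p-1}$. Composing $\phi$ with the projection $G\to G/\Phi(N)$ yields a surjection $\psi\colon F\to G/\Phi(N)$ onto a group in $\var{Ab}_p*\var{Ab}_{p-1}$, and it remains only to check that $\psi(H)\neq G/\Phi(N)$, i.e.\ that $\phi(H)\Phi(N)\neq G$.

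The crux, and the step I expect to require the most care to state precisely, is the generalized Frattini argument: if $\phi(H)\Phi(N)=G$, then already $\phi(H)=G$. To see this I would intersect with $N$ and apply Dedekind's modular law (legitimate since $\Phi(N)\leq N$) to get $N=\phi(H)\Phi(N)\cap N=(\phi(H)\cap N)\Phi(N)$. Because $\Phi(N)$ is the Frattini subgroup of $N$, its elements are non-generators, so the subgroup $\phi(H)\cap N$ must itself generate $N$; being a subgroup, $\phi(H)\cap N=N$, whence $N\leq\phi(H)$. Then $\Phi(N)\leq N\leq\phi(H)$ forces $\phi(H)=\phi(H)\Phi(N)=G$, contrary to assumption. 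Therefore $\phi(H)\Phi(N)\neq G$, so $\psi(H)\neq G/\Phi(N)$, showing $H$ is not $\var{Ab}_p*\var{Ab}_{p-1}$-dense and completing the contrapositive. (Alternatively, one could obtain the reverse implication from the explicit kernel computation in Lemma \ref{atot}, but the Frattini argument above is more transparent and avoids identifying the free object.)
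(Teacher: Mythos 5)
Your proof is correct, and while both arguments are ``Frattini arguments,'' the key step is carried out differently from the paper. The paper also argues by contraposition, but it invokes the Schur--Zassenhaus theorem to write $G=P\rtimes B$ with $P\in\var{G}_p$ and $B\in\var{Ab}_{p-1}$, passes to the quotient $G/\Phi(G)$ by the Frattini subgroup of the \emph{whole} group $G$ (using the containment $\Phi(P)\leq\Phi(G)$ for the normal subgroup $P$ to see that $G/\Phi(G)$ is a quotient of $(P/\Phi(P))\rtimes B$ and hence lies in $\var{Ab}_p*\var{Ab}_{p-1}$), and then gets properness of the image of $H$ for free by choosing a maximal subgroup $M\geq\phi(H)$ and noting $\phi(H)\Phi(G)\leq M$. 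You instead quotient only by $\Phi(N)$, the Frattini subgroup of the normal Sylow $p$-subgroup $N$, which makes membership of $G/\Phi(N)$ in $\var{Ab}_p*\var{Ab}_{p-1}$ immediate from the definition of the product variety (no Schur--Zassenhaus and no $\Phi(P)\leq\Phi(G)$ needed), at the cost of having to prove properness by hand via Dedekind's modular law and the non-generator property of $\Phi(N)$ inside $N$. That step is stated and executed correctly: from $\phi(H)\Phi(N)=G$ you get $N=(\phi(H)\cap N)\Phi(N)$, hence $\phi(H)\cap N=N$, hence $\phi(H)=G$, a contradiction. Your route trades the paper's slicker maximal-subgroup finish for more elementary ingredients; both are valid, and your observation that Lemma \ref{atot} gives an alternative (kernel-based) proof matches how the paper actually deploys these lemmas in Theorem \ref{hpdense}.
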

\begin{proof}
Suppose $H$ is $\var{H}_p$-dense in $F$. The variety $\var{Ab}_p*\var{Ab}_{p-1}$ is a subvariety of $\var{H}_p$, then by Lemma \ref{subd} $H$ is $\var{Ab}_p*\var{Ab}_{p-1}$-dense in $F$.

Conversely, assume $H$ is $\var{Ab}_p*\var{Ab}_{p-1}$-dense but not $\var{H}_p$-dense in $F$, then by Corollary \ref{dense} there exists a surjective homomorphism $\phi: F\to G$ for some $G$ in the variety $\var{H}_p=\var{G}_p* \var{Ab}_{p-1}$ such that $\phi(H)\lneqq \phi(F)=G$.

Suppose $M$ is a maximal subgroup of $G$ containing $\phi(H)$, then  for the Frattini subgroup $\Phi(G)$ of $G$ which is the intersection of all maximal subgroups of $G$, we have  $\phi(H)\cdot\Phi(G)\leq M$.

Since $p$ is coprime with $p-1$, by Schur-Zassenhaus theorem $G$ is a semidirect product $P\rtimes B$ for some $p$-group $P\in \var{G}_p$ and $B\in \var{Ab}_{p-1}$. Note that $P/\Phi(P)$ is an abelian $p$-group, that is $P/\Phi(P)\in \var{Ab}_p$, then $P\rtimes B/\Phi(P)\cong (P/\Phi(P))\rtimes B$ lies in $\var{Ab}_p*\var{Ab}_{p-1}$. Since $\Phi(P)\leq \Phi(G)$, $G/\Phi(G)$ is a quotient of $P\rtimes B/\Phi(P)$, and it lies in the variety $\var{Ab}_p*\var{Ab}_{p-1}$ also.

Let $\tau: G\to G/\Phi(G)$ be the natural quotient map, then $$\tau\circ\phi(H)=(\phi(H)\cdot \Phi(G))/\Phi(G)\leq M/\Phi(G)$$ which is proper in $\tau\circ\phi(F)=G/\Phi(G)$, so $H$ is not $\var{Ab}_p*\var{Ab}_{p-1}$-dense in $F$, contrary to the assumption.
\end{proof}

%
%
%
%

\begin{thm}\label{hpdense}
Let $H$ be a finitely generated subgroup of $F$, then the following are equivalent:

(1) $H$ is $\var{H}_p$-dense in $F$;

(2) $H$ is $\var{Ab}_p*\var{Ab}_{p-1}$-dense in $F$;

(3) $H\cdot [N_{p-1},N_{p-1}]N^p_{p-1}=F$ where $N_{p-1}=[F,F]F^{p-1}$;

(4) $H$ is $\var{Ab}_{p-1}$-dense in $F$ and $H\cap N_{p-1}$ is $\var{Ab}_{p}$-dense in $N_{p-1}$.
\end{thm}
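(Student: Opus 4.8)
The plan is to prove the equivalences in two stages. The chain (1)$\Leftrightarrow$(2)$\Leftrightarrow$(3) is almost immediate from the results already established, while the heart of the theorem is the equivalence (3)$\Leftrightarrow$(4), which I would reduce to an elementary identity among subgroups of $F$ after translating the two density clauses of (4) into subgroup-product equations.

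For the first stage, (1)$\Leftrightarrow$(2) is exactly Lemma \ref{hptoa}. For (2)$\Leftrightarrow$(3), observe that $H$ is $\var{Ab}_p*\var{Ab}_{p-1}$-dense precisely when $Cl_{\var{Ab}_p*\var{Ab}_{p-1}}(H)=F$; by Lemma \ref{atot} this closure equals $H\cdot[N_{p-1},N_{p-1}]N^p_{p-1}$, so the density is equivalent to $H\cdot[N_{p-1},N_{p-1}]N^p_{p-1}=F$, which is (3).

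For the second stage I would first record two translations. Since $F/N_{p-1}\cong(\Z/(p-1)\Z)^A$ is finite, $N_{p-1}$ is a finitely generated free group and $H\cap N_{p-1}$ is finitely generated; moreover $N_{p-1}=[F,F]F^{p-1}$ is the kernel of the canonical projection of $F$ onto $(\Z/(p-1)\Z)^A$, while $[N_{p-1},N_{p-1}]N^p_{p-1}$ is the kernel of the canonical projection of $N_{p-1}$ onto $(\Z/p\Z)^B$ for a basis $B$ of $N_{p-1}$. Writing $M=[N_{p-1},N_{p-1}]N^p_{p-1}$, Example \ref{abm} together with Corollary \ref{dense} shows that the first clause of (4) --- that $H$ be $\var{Ab}_{p-1}$-dense in $F$ --- is equivalent to $H\cdot N_{p-1}=F$ (note that $p-1$ need not be prime, so I invoke the free object of Example \ref{abm} rather than Lemma \ref{gpabp} here), and Lemma \ref{gpabp} with Remark \ref{pdense}, applied inside the free group $N_{p-1}$, shows that the second clause --- that $H\cap N_{p-1}$ be $\var{Ab}_p$-dense in $N_{p-1}$ --- is equivalent to $(H\cap N_{p-1})\cdot M=N_{p-1}$.

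It then remains to prove that $H\cdot M=F$ if and only if both $H\cdot N_{p-1}=F$ and $(H\cap N_{p-1})\cdot M=N_{p-1}$ hold. For the forward direction, $M\leq N_{p-1}$ gives $H\cdot N_{p-1}=F$ at once; and for $x\in N_{p-1}$, writing $x=hm$ with $h\in H$ and $m\in M\leq N_{p-1}$ forces $h=xm^{-1}\in H\cap N_{p-1}$, so $N_{p-1}\subseteq(H\cap N_{p-1})M$ and hence $(H\cap N_{p-1})M=N_{p-1}$. Conversely, using $H\cap N_{p-1}\leq H$,
$$F=H\cdot N_{p-1}=H\cdot(H\cap N_{p-1})\cdot M=H\cdot M.$$
I expect no genuine obstacle here; the only point demanding care is the bookkeeping of the second stage, namely checking that $N_{p-1}$ is finitely generated and free so that the density criterion may legitimately be applied within it, and identifying $M$ correctly as the kernel of the appropriate canonical projection of $N_{p-1}$. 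The concluding coset computation is a single application of the modular law.
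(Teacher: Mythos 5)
Your proposal is correct and follows essentially the same route as the paper: (1)$\Leftrightarrow$(2) via Lemma \ref{hptoa}, (2)$\Leftrightarrow$(3) via Lemma \ref{atot}, and (3)$\Leftrightarrow$(4) by the modular-law identity $(H\cdot M)\cap N_{p-1}=(H\cap N_{p-1})\cdot M$ together with $H\cdot(H\cap N_{p-1})\cdot M=H\cdot M$. Your observation that the $\var{Ab}_{p-1}$-density clause should be justified via the free object of Example \ref{abm} (since $p-1$ need not be prime) is a slightly more careful citation than the paper's appeal to Remark \ref{pdense}, but the argument is the same.
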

\begin{proof}
By Lemma \ref{atot} and \ref{hptoa} and we only need to prove (3)$\Leftrightarrow$(4).


Suppose $H\cdot [N_{p-1},N_{p-1}]N^p_{p-1}=F$, then $H\cdot N_{p-1}=F$ since $[N_{p-1},N_{p-1}]N^p_{p-1}$ is a subgroup of $N_{p-1}$, and we have $$(H\cap N_{p-1})\cdot  [N_{p-1},N_{p-1}]N^p_{p-1}=(H\cdot [N_{p-1},N_{p-1}]N^p_{p-1})\cap N_{p-1}=F\cap N_{p-1}=N_{p-1}.$$ So by Remark \ref{pdense}, $H$ is $\var{Ab}_{p-1}$-dense in $F$ and $H\cap N_{p-1}$ is $\var{Ab}_{p}$-dense in $N_{p-1}$.

Conversely, assume $H$ is $\var{Ab}_{p-1}$-dense in $F$ and $H\cap N_{p-1}$ is $\var{Ab}_{p}$-dense in $N_{p-1}$, then by Remark \ref{pdense}, $H\cdot N_{p-1}=F$ and $(H\cap N_{p-1})\cdot  [N_{p-1},N_{p-1}]N^p_{p-1}=N_{p-1}$ and we have $$F=H\cdot N_{p-1}=H\cdot ((H\cap N_{p-1})\cdot  [N_{p-1},N_{p-1}]N^p_{p-1})=H\cdot [N_{p-1},N_{p-1}]N^p_{p-1}.$$
\end{proof}

\begin{lem}\label{abd}
Let $H$ be a finitely generated subgroup of $F$, then $H$ is  $\var{Ab}_d$-dense in $F$ if and only if for each prime factor $p$ of $d$, $H$ is  $\var{Ab}_p$-dense in $F$.
\end{lem}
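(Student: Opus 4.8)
The plan is to reformulate denseness in terms of the canonical projection onto the finite free object $F_{\var{Ab}_d}(A)\cong(\Z/d\Z)^A$ from Example \ref{abm}, and then to analyze this abelian group one prime at a time. One implication is free: for every prime $p$ dividing $d$ the variety $\var{Ab}_p$ is a subvariety of $\var{Ab}_d$, so if $H$ is $\var{Ab}_d$-dense then $H$ is $\var{Ab}_p$-dense for each such $p$ by Lemma \ref{subd}(1).

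For the converse, fix a basis $A$ of $F$ with $|A|=n$ and let $\sigma_d\colon F\to(\Z/d\Z)^A$ be the canonical projection. Exactly as in the proof of Lemma \ref{atot}, since $(\Z/d\Z)^A$ is the free object of $\var{Ab}_d$ over $A$, every surjection of $F$ onto a group of $\var{Ab}_d$ factors through $\sigma_d$; hence by Corollary \ref{dense} the group $H$ is $\var{Ab}_d$-dense if and only if $\sigma_d(H)=(\Z/d\Z)^A$. Writing $d=p_1^{a_1}\cdots p_k^{a_k}$, the Chinese remainder theorem gives $(\Z/d\Z)^A\cong\prod_{i=1}^k(\Z/p_i^{a_i}\Z)^A$, and under this identification the $i$-th coordinate projection of $\sigma_d$ is the canonical projection $\sigma_{p_i^{a_i}}$, whose further reduction modulo $p_i$ is $\sigma_{p_i}$. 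Thus it suffices to show that $\sigma_d(H)$ fills the whole product, under the hypothesis $\sigma_{p_i}(H)=(\Z/p_i\Z)^A$ for every $i$ (which is what $\var{Ab}_{p_i}$-denseness says, by Lemma \ref{gpabp}).

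I would carry this out in two steps. First, the image $M_i:=\sigma_{p_i^{a_i}}(H)$ is a subgroup of $(\Z/p_i^{a_i}\Z)^A$, hence automatically a $\Z/p_i^{a_i}\Z$-submodule of this free module, since multiplication by a ring element coincides with multiplication by an integer. Its reduction modulo $p_i$ equals $\sigma_{p_i}(H)=(\Z/p_i\Z)^A$, so $M_i+p_i(\Z/p_i^{a_i}\Z)^A=(\Z/p_i^{a_i}\Z)^A$; as $\Z/p_i^{a_i}\Z$ is local with maximal ideal $(p_i)$ and the module is finitely generated, Nakayama's lemma yields $M_i=(\Z/p_i^{a_i}\Z)^A$. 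Second, $\sigma_d(H)$ is a subgroup of $\prod_i(\Z/p_i^{a_i}\Z)^A$ surjecting onto each factor, and the factors have pairwise coprime orders; a subgroup of a product of groups of pairwise coprime orders that surjects onto every factor must be the whole product, so $\sigma_d(H)=(\Z/d\Z)^A$ and $H$ is $\var{Ab}_d$-dense.

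The routine points are the identification of denseness with surjectivity of $\sigma_d$ and the two standard module-theoretic facts. The only genuinely non-formal step is the passage from $\var{Ab}_{p}$-denseness to $\var{Ab}_{p^{a}}$-denseness: the inclusion $\var{Ab}_p\subseteq\var{Ab}_{p^a}$ supplies only the easy direction, and lifting a generating set modulo $p$ to a generating set modulo $p^a$ is precisely what Nakayama's lemma provides. I expect this to be the main obstacle to state carefully, whereas the coprime-factor assembly is bookkeeping.
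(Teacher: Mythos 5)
Your proof is correct, but it takes a genuinely different route from the paper's. The paper handles the converse by contradiction in a few lines: if $H$ is not $\var{Ab}_d$-dense, Corollary \ref{dense} gives a surjection $\phi\colon F\to G$ onto some $G\in\var{Ab}_d$ with $\phi(H)\lneqq G$; the nontrivial abelian group $G/\phi(H)$ has exponent dividing $d$, hence surjects onto $\Z/p\Z$ for some prime $p$ dividing $d$, and the composite $F\to G\to G/\phi(H)\to \Z/p\Z$ sends $H$ to $1$ but is onto, contradicting $\var{Ab}_p$-denseness. You instead work directly in the free object $(\Z/d\Z)^A$, decompose it by the Chinese remainder theorem, lift surjectivity from $(\Z/p_i\Z)^A$ to $(\Z/p_i^{a_i}\Z)^A$ by Nakayama, and reassemble the factors using coprimality of their orders. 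Both arguments are sound. The paper's is shorter and basis-free, never needing the free object or the passage from $p$ to $p^a$, because the contradiction only requires a single prime quotient. Yours is direct and slightly more informative: it identifies $Cl_{\var{Ab}_d}(H)$ as $\sigma_d^{-1}(\sigma_d(H))=H\cdot[F,F]F^d$ along the way, in parallel with Lemma \ref{atot}, and it isolates exactly where the nontrivial lifting happens. A small simplification: the Nakayama step needs no module theory, since $p_i(\Z/p_i^{a_i}\Z)^A$ is the Frattini subgroup of the abelian $p_i$-group $(\Z/p_i^{a_i}\Z)^A$, so $M_i+p_i(\Z/p_i^{a_i}\Z)^A=(\Z/p_i^{a_i}\Z)^A$ forces $M_i=(\Z/p_i^{a_i}\Z)^A$ directly.
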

\begin{proof}
Suppose $H$ is  $\var{Ab}_d$-dense in $F$, since $\var{Ab}_p$ is a subvariety of $\var{Ab}_d$, $H$ is  $\var{Ab}_p$-dense by Lemma \ref{subd}.

Conversely, assume $H$ is  $\var{Ab}_p$-dense for each prime $p$ divides $d$ but not $\var{Ab}_d$-dense, then by Corollary \ref{dense} there exist $G\in \var{Ab}_d$ and a surjective homomorphism $\phi: F\to G$ such that $\phi(H)\lneqq \phi(F)=G$. The quotient group $G/\phi(H)$ is a nontrivial abelian group and so can project onto  a cyclic group $\Z/p\Z$ for some prime $p$ divides $d$. Then for the composition $\psi: F\to G\to G/\phi(H)\to \Z/p\Z$, $1=\psi(H)\lneqq \psi(F)=\Z/p\Z$, contrary to the assumption that $H$ is  $\var{Ab}_p$-dense.
\end{proof}

By Lemma \ref{abd} and Corollary \ref{pdec}, it is decidable whether $H$ is  $\var{Ab}_{p-1}$-dense in $F$ and $H\cap N_{p-1}$ is $\var{Ab}_{p}$-dense in $N_{p-1}$, thus by Theorem \ref{hpdense}  we have

\begin{cor}\label{hpd}
Let $H$ be a finitely generated subgroup of $F$, then it is decidable whether $H$ is $\var{H}_p$-dense.
\end{cor}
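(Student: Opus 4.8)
The plan is to reduce the decision problem for $\var{H}_p$-denseness to finitely many instances of the already decidable problem of $\var{G}_q$-denseness. By Theorem \ref{hpdense}, $H$ is $\var{H}_p$-dense in $F$ if and only if condition (4) holds, namely that $H$ is $\var{Ab}_{p-1}$-dense in $F$ and that $H\cap N_{p-1}$ is $\var{Ab}_p$-dense in $N_{p-1}$, where $N_{p-1}=[F,F]F^{p-1}$. Hence it suffices to exhibit an algorithm deciding each of these two denseness conditions separately; their conjunction is then decidable.

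For the first condition, I would invoke Lemma \ref{abd}: $H$ is $\var{Ab}_{p-1}$-dense in $F$ precisely when $H$ is $\var{Ab}_q$-dense for every prime $q$ dividing $p-1$. Since $p-1$ admits only finitely many prime divisors, and these can be found by factorization, it remains to decide each $\var{Ab}_q$-denseness condition. By Lemma \ref{gpabp}, $\var{Ab}_q$-denseness coincides with $\var{G}_q$-denseness, which is decidable by Corollary \ref{pdec}. Running this test over the finite list of primes $q\mid p-1$ therefore settles the first condition.

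For the second condition, I first observe that $N_{p-1}$ has finite index in $F$, since $F/N_{p-1}\cong(\Z/(p-1)\Z)^A$ is finite; consequently $N_{p-1}$ is itself a finitely generated free group by the Nielsen--Schreier theorem, and $H\cap N_{p-1}$, having finite index in the finitely generated group $H$, is again finitely generated. Regarding $N_{p-1}$ as the ambient free group and $H\cap N_{p-1}$ as a finitely generated subgroup of it, Lemma \ref{gpabp} identifies $\var{Ab}_p$-denseness with $\var{G}_p$-denseness, and the latter is decidable by Corollary \ref{pdec}. This disposes of the second condition, and the corollary follows.

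The main obstacle is not logical but effective: to apply the machinery of the preceding sections inside $N_{p-1}$, one must produce concrete data---a finite free basis $A'$ of $N_{p-1}$ together with a finite generating set of $H\cap N_{p-1}$ written over $A'$---so that the canonical projection $\sigma\colon N_{p-1}\to(\Z/p\Z)^{A'}$ and the resulting test of Lemma \ref{gpabp} become explicit. All of this can be extracted algorithmically from the labeled-graph description: a Schreier transversal and Schreier basis of $N_{p-1}$ are read off a spanning tree of its coset graph, while the Stallings graph $\Gamma_A(H)$ together with the finite-index data determines $H\cap N_{p-1}$ and its rewriting over $A'$. Verifying that this bookkeeping can indeed be carried out effectively is the one point that requires care.
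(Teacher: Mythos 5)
Your proof is correct and follows essentially the same route as the paper: reduce via condition (4) of Theorem \ref{hpdense}, handle $\var{Ab}_{p-1}$-denseness through Lemma \ref{abd}, and settle each remaining denseness test via Lemma \ref{gpabp} and Corollary \ref{pdec}. The paper compresses this into one sentence; your extra paragraph on effectively extracting a basis of $N_{p-1}$ and generators of $H\cap N_{p-1}$ addresses a point the paper leaves implicit but is standard Stallings-graph bookkeeping.
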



\subsubsection{Computing $\var{H}_p$-closures} For two finitely generated subgroups $H,K$ of $F$ such that $H\leq K$, let $K$ be endowed with the pro-$\var{V}$ topology, we denote the $\var{V}$-closure of $H$ in $K$ by $Cl_{\var{V}}(H,K)$.

The pro-$\var{V}$ topology of $K$ is usually not the restriction to $K$ of the pro-$\var{V}$ topology of $F$, but when $\var{V}$ is an extension-closed variety, then for an open subgroup $K$, these two topology coincide (\cite{MSW01}, Proposition 1.6). This is the key for the authors of \cite{MSW01} to construct an algorithm to compute the $\var{G}_p$-closure of a subgroup.

In \cite{AS}, Auinger-Steinberg proved that the $\var{V}$-closure of a finitely generated subgroup is computable for certain product variety $\var{V}=\var{U}*\var{W}$  which can be applied to the case that $\var{V}=\var{H}_p=\var{G}_p*\var{Ab}_{p-1}$.

\begin{prop}[\cite{AS}, Proposition 4.4]
Suppose that $\var{U}$ and $\var{W}$ are varieties of groups with $\var{U}$ having computable closures and $\var{W}$ being locally finite. Suppose, moreover, there is an algorithm that on input a finite set $A$ outputs generators of the kernel $K$ of the canonical projection $\rho: F(A)\to F_\var{W}(A)$. Then $\var{U}*\var{W}$ has computable closures.
\end{prop}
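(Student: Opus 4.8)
The plan is to reduce the computation of $Cl_{\var{U}*\var{W}}(H)$ to a relative $\var{U}$-closure inside the finite-index subgroup $K=\ker\rho$, generalizing the pattern of Lemma \ref{atot} (where $\var{U}=\var{Ab}_p$, $\var{W}=\var{Ab}_{p-1}$ and $K=N_{p-1}$). The structural formula I would establish is
\[
Cl_{\var{U}*\var{W}}(H)=H\cdot Cl_{\var{U}}(H\cap K,\,K),
\]
where on the right $K$ is viewed as a free group in its own right (it is a finite-index, hence finitely generated free, subgroup of $F(A)$) carrying its intrinsic pro-$\var{U}$ topology. Since $F(A)/K=F_\var{W}(A)$ lies in $\var{W}\subseteq\var{U}*\var{W}$, the projection $\rho$ belongs to $\Phi_{\var{U}*\var{W}}$, so $Cl_{\var{U}*\var{W}}(H)\subseteq\rho^{-1}\rho(H)=HK$; writing $C=Cl_{\var{U}*\var{W}}(H)$, the relation $H\le C\subseteq HK$ gives at once $C=H\cdot(C\cap K)$. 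Thus everything reduces to identifying $C\cap K$ with $Cl_{\var{U}}(H\cap K,K)$.

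For the inclusion $Cl_{\var{U}}(H\cap K,K)\subseteq C\cap K$ I would argue as follows. Given any $\phi\colon F\to G$ with $G\in\var{U}*\var{W}$, pick $N\lhd G$ with $N\in\var{U}$ and $G/N\in\var{W}$; composing $\phi$ with $G\to G/N$ produces a $\var{W}$-quotient of $F$, which factors through the free object $F_\var{W}(A)$ by its universal property, so $K\subseteq\ker(F\to G/N)$ and hence $\phi(K)\subseteq N\in\var{U}$. Therefore $\phi|_K$ is a homomorphism from $K$ into a group of $\var{U}$ and cannot separate any element of $Cl_{\var{U}}(H\cap K,K)$ from $H\cap K$; this forces $Cl_{\var{U}}(H\cap K,K)\subseteq\phi^{-1}\phi(H)$, and intersecting over all such $\phi$ yields the inclusion.

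The reverse inclusion $C\cap K\subseteq Cl_{\var{U}}(H\cap K,K)$ is the step I expect to be the main obstacle, since it requires manufacturing a separating $\var{U}*\var{W}$-quotient of $F$ out of a separating $\var{U}$-quotient of $K$. Given $g\in K$ with $g\notin Cl_{\var{U}}(H\cap K,K)$, choose $\psi\colon K\to U\in\var{U}$ with $\psi(g)\notin\psi(H\cap K)$ and set $L=\ker\psi\lhd K$, of finite index. I would pass to the normal core $L_0=\bigcap_{f\in F}fLf^{-1}$ in $F$ and consider $G=F/L_0$. Since $K\lhd F$, each conjugate $fLf^{-1}$ is a normal subgroup of $K$ contained in $K$ and depends only on the coset $fK$, so the diagonal map embeds $K/L_0$ into the finite product $\prod_{f}K/fLf^{-1}$ of conjugate copies of $K/L\cong\psi(K)\in\var{U}$; hence $K/L_0\in\var{U}$, while $(F/L_0)/(K/L_0)=F/K\in\var{W}$, giving $G\in\var{U}*\var{W}$. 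A direct check using $g\in K$ and $L_0\subseteq L\subseteq K$ shows $g\notin H\cdot L_0$ (otherwise $g\in(H\cap K)L$, contradicting the choice of $\psi$), so the projection $F\to G$ separates $g$ from $H$ and $g\notin C$. This establishes the remaining inclusion and the formula.

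Finally, I would deduce computability from the formula. Because $\var{W}$ is locally finite, $F_\var{W}(A)$ is finite and $K$ has finite index; using the hypothesized algorithm for generators of $K$, I can build the Schreier data of $K$, extract an explicit free basis $B$, and compute $H\cap K$ (for instance as a fiber product of the relevant reduced $A$-labeled graphs) together with generators written as words in $B$. Viewing $(H\cap K)\le K=F(B)$ and invoking the assumption that $\var{U}$ has computable closures, I obtain generators of $Cl_{\var{U}}(H\cap K,K)$ as words in $B$, rewrite them as elements of $F(A)$, and adjoin the given generators of $H$; by the formula this finite set generates $Cl_{\var{U}*\var{W}}(H)$, so the latter is computable. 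In the computability half the only delicate points are producing the free basis of $K$ from the supplied generators and the routine bookkeeping of rewriting words between the bases of $K$ and $F(A)$.
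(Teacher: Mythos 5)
Your proof is correct, and your structural formula $Cl_{\var{U}*\var{W}}(H)=H\cdot Cl_{\var{U}}(H\cap K,K)$ is exactly the description the paper records (as the union $\bigcup_{t\in T} Cl_{\var{U}}(H\cap K,K)t$ over a Schreier transversal $T$ of $H\cap K$ in $H$) from Auinger--Steinberg. The paper itself only cites \cite{AS}, Proposition 4.4 without proof, and your argument --- the reduction $C=H\cdot(C\cap K)$ via $\rho$, the two inclusions identifying $C\cap K$ with the relative $\var{U}$-closure (using the normal core of $\ker\psi$ for the hard direction), and the computability bookkeeping --- supplies that proof along essentially the same lines.
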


\begin{rem}
In fact, the authors proved $$Cl_{\var{U}*\var{W}}(H)=\bigcup_{t\in T} Cl_{\var{U}}(H\cap K, K)t,$$
where $T$ is a Schreier transversal for the right cosets of $H\cap K$ in $H$ which is computable under the assumption in the above proposition. 
\end{rem}

For $\var{U}=\var{G}_p$ and $\var{W}=\var{Ab}_{p-1}$, the $\var{G}_p$-closure can be computed by Corollary \ref{gpc}, $\var{W}$ is locally finite and the kernel of the canonical projection $\rho: F(A)\to F_\var{W}(A)$ is $N_{p-1}=[F,F]F^{p-1}$ mentioned before whose generators can be expressed as words over the alphabet $A$ by an algorithm. We have the following

\begin{cor}
Let $H$ be a finitely generated subgroup of $F$, then the $\var{H}_p$-closure of $H$ is computable.
\end{cor}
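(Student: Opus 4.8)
The plan is to apply the cited Proposition 4.4 of \cite{AS} with $\var{U}=\var{G}_p$ and $\var{W}=\var{Ab}_{p-1}$, so that $\var{U}*\var{W}=\var{G}_p*\var{Ab}_{p-1}=\var{H}_p$. It then suffices to verify the three hypotheses of that proposition. The first, that $\var{G}_p$ has computable closures, is exactly Corollary \ref{gpc}. The second, that $\var{Ab}_{p-1}$ is locally finite, is the case $m=p-1$ of Example \ref{abm}, where $F_{\var{Ab}_m}(A)\cong(\Z/m\Z)^A$ is finite for every finite alphabet $A$.

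The remaining hypothesis asks for an algorithm that, on input $A$, outputs a finite generating set of the kernel of the canonical projection $\rho\colon F(A)\to F_{\var{Ab}_{p-1}}(A)$. By Example \ref{abm} this kernel is precisely $N_{p-1}=[F,F]F^{p-1}$. Since $F/N_{p-1}\cong(\Z/(p-1)\Z)^A$ is finite, $N_{p-1}$ is a normal subgroup of finite index $(p-1)^{|A|}$, and a finite generating set can be produced effectively by the Schreier procedure described in Section \ref{sp3}: build the finite coset graph of $N_{p-1}$ in $F(A)$ (equivalently the Cayley graph of $(\Z/(p-1)\Z)^A$), choose a spanning tree, and read off the associated Schreier generators as words over $A$. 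This terminates because the graph is finite.

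With the three hypotheses verified, Proposition 4.4 of \cite{AS} yields directly that $\var{H}_p$ has computable closures, which is the assertion; concretely, the algorithm computes $Cl_{\var{H}_p}(H)=\bigcup_{t\in T}Cl_{\var{G}_p}(H\cap N_{p-1},N_{p-1})\,t$ for a computable Schreier transversal $T$ of $H\cap N_{p-1}$ in $H$, using the computability of $\var{G}_p$-closures inside the open subgroup $N_{p-1}$. The only step requiring real care is the effective generation of $N_{p-1}$, but since its index is finite and computable this is routine; all the substantive work has already been carried out in the lemmas of this section, so I expect no genuine obstacle.
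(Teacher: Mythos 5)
Your proposal is correct and follows essentially the same route as the paper: both verify the hypotheses of Proposition 4.4 of \cite{AS} for $\var{U}=\var{G}_p$ and $\var{W}=\var{Ab}_{p-1}$, using Corollary \ref{gpc} for computable $\var{G}_p$-closures, Example \ref{abm} for local finiteness, and the identification of the kernel with $N_{p-1}=[F,F]F^{p-1}$ together with an effective generating set. Your extra detail on producing Schreier generators of $N_{p-1}$ from its finite-index coset graph is a sound elaboration of what the paper only asserts in passing.
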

%
%
%
%

\subsection{Supersolvable closures of subgroups}\label{t2}
Let $\mathbb{P}$ be the set of all prime numbers, for a nonempty subset $\pi\subset \mathbb{P}$, denote by $\pi'$ the complement of $\pi$ in $\mathbb{P}$. Moreover, if $\pi$ consists of only one element $p$, we denote $\pi'$ by $p'$. We denote the set of all prime numbers dividing $n$ by $\pi(n)$ and $\pi(|G|)$ by $\pi(G)$ for a finite group $G$.

\begin{defn}[$\pi$-group]
Let $\pi$ be a nonempty subset of $\PP$. A finite group $G$ is called a $\pi$-group if $\pi(G)\subset \pi$.
\end{defn}

\begin{defn}[$\pi$-core]
For a nonempty subset $\pi\subset \PP$, the $\pi$-core $O_{\pi}(G)$ of $G$ is the unique largest normal $\pi$-subgroup of $G$. If $\pi$ consists of only one element $p$, we denote $O_{\{p\}}(G)$ by $O_p(G)$ which is called the $p$-core of $G$.
\end{defn}

\begin{rem}
The $p$-core $O_p(G)$ is the intersection of all Sylow $p$-subgroups of $G$. If $p\notin \pi(G)$, then $O_p(G)=1$ while $O_{p'}(G)=G$.
\end{rem}

\begin{lem}\label{ff}
Let $G$ be a finite group, then $\bigcap_{p\in\pi(G)}O_{p'}(G)$ is trivial.
\end{lem}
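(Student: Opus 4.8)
The plan is to set $N:=\bigcap_{p\in\pi(G)}O_{p'}(G)$ and show directly that $N=1$ by a prime-counting argument. The point is that membership in each $p'$-core forces $N$ to avoid the prime $p$ as a factor, and letting $p$ range over all of $\pi(G)$ leaves $N$ with no admissible prime divisors at all.

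First I would record that $N$ is a well-defined normal subgroup of $G$, being an intersection of the normal subgroups $O_{p'}(G)$. The key local observation is this: for each fixed $p\in\pi(G)$, the definition of the $\pi$-core gives that $O_{p'}(G)$ is a $p'$-group, i.e.\ $p\nmid |O_{p'}(G)|$. Since $N\leq O_{p'}(G)$, Lagrange's theorem yields $|N|\mid |O_{p'}(G)|$, and therefore $p\nmid |N|$. As this holds for every prime $p\in\pi(G)$, we conclude that no element of $\pi(G)$ divides $|N|$, i.e.\ $\pi(N)\cap\pi(G)=\varnothing$.

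On the other hand, $N\leq G$ forces $|N|\mid |G|$, so every prime dividing $|N|$ lies in $\pi(G)$; that is, $\pi(N)\subseteq\pi(G)$. Combining the two displayed facts gives $\pi(N)=\varnothing$, which means $|N|=1$ and hence $N$ is trivial, as claimed.

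I do not expect a genuine obstacle here: the statement is essentially an immediate consequence of the definition of the $p'$-core together with Lagrange's theorem. The only point that warrants a careful phrasing is the identification of $O_{p'}(G)$ as a $p'$-group, which is exactly the content of the definition of $O_\pi(G)$ as the largest normal $\pi$-subgroup applied to $\pi=p'$; once this is made explicit the argument is purely numerical.
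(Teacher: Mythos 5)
Your argument is correct and is essentially the paper's own proof: both show that $|N|$ divides $|O_{p'}(G)|$ for each $p\in\pi(G)$, hence is coprime to every prime in $\pi(G)$, and conclude $N=1$ since $|N|$ divides $|G|$. Your write-up just makes the final step ($\pi(N)\subseteq\pi(G)$ forces $\pi(N)=\varnothing$) more explicit.
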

\begin{proof}
For each $p\in\pi(G)$, the order of $\bigcap_{p\in\pi(G)}O_{p'}(G)$ divides the order of $O_{p'}(G)$. It is coprime with each prime factor of $\pi(G)$, hence $\bigcap_{p\in\pi(G)}O_{p'}(G)$ is trivial.
\end{proof}

\begin{lem}\label{fff}
Let $M$  be a subgroup of a finite group $G$, then $M=\bigcap_{p\in\pi(G)}MO_{p'}(G)$.
\end{lem}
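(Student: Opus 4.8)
The plan is to prove the two inclusions separately, the nontrivial one resting on an index-counting argument. Since $O_{p'}(G)$ is a normal subgroup of $G$, each product $M O_{p'}(G)$ is a subgroup of $G$ containing $M$; hence $M \subseteq \bigcap_{p \in \pi(G)} M O_{p'}(G)$, which gives one inclusion for free. Write $K = \bigcap_{p \in \pi(G)} M O_{p'}(G)$, so that $M \leq K \leq M O_{p'}(G)$ for every $p \in \pi(G)$, and it remains to show $K = M$, i.e. that $[K:M] = 1$.

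First I would pin down the index of $M$ in each factor. By the order formula for the product of a subgroup with a normal subgroup, $|M O_{p'}(G)| = |M|\,|O_{p'}(G)| / |M \cap O_{p'}(G)|$, so $[M O_{p'}(G) : M] = [O_{p'}(G) : M \cap O_{p'}(G)]$. This index divides $|O_{p'}(G)|$, which is a $p'$-number, and hence $[M O_{p'}(G):M]$ is coprime to $p$.

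Then I would transfer this coprimality to $K$. From the chain $M \leq K \leq M O_{p'}(G)$ one has $[M O_{p'}(G):M] = [M O_{p'}(G):K]\,[K:M]$, so $[K:M]$ divides $[M O_{p'}(G):M]$ and is therefore coprime to $p$ as well. As this holds for every $p \in \pi(G)$, the integer $[K:M]$ is coprime to each prime dividing $|G|$. On the other hand $[K:M]$ divides $|K|$, which divides $|G|$, so every prime factor of $[K:M]$ lies in $\pi(G)$. These two facts together force $[K:M]=1$, whence $K=M$.

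The one point requiring care --- the main obstacle, such as it is --- is the coprimality bookkeeping: checking that $[M O_{p'}(G):M]$ really is a $p'$-number and that this coprimality descends to the sub-index $[K:M]$ through the subgroup tower $M \le K \le M O_{p'}(G)$. This is the finite-group analogue of the trivial-intersection phenomenon recorded in Lemma \ref{ff}, and no genuinely hard step is involved once the relevant indices are identified correctly.
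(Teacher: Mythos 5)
Your proof is correct and follows essentially the same order-counting strategy as the paper: both arguments reduce to the fact that $|O_{p'}(G)|$ is a $p'$-number, so the intersection can exceed $M$ only by a factor coprime to every prime in $\pi(G)$. The paper phrases this as a gcd computation on explicit prime factorizations of $|M|\,|O_{p'_i}(G)|$, while you phrase it via multiplicativity of indices in the tower $M \le K \le MO_{p'}(G)$; the difference is purely presentational.
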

\begin{proof}
Suppose $\pi(G)=\{p_1,...,p_k\}$, we have  $|G|=p_1^{n_1}\cdots p_k^{n_k}$, $|M|=p_1^{m_1}\cdots p_k^{m_k}$ and $|O_{p'_i}(G)|=p_1^{\a_{i1}}\cdots p_i^{\a_{ii}}\cdots p_k^{\a_{ik}}$ for $i=1,...,k$. Note that $\a_{ii}=0$ since $p_i$ is coprime with $|O_{p'_i}(G)|$.

The order of $MO_{p'_i}(G)$ divides 
$$|M||O_{p'_i}(G)|=p_1^{m_1+\a_{i1}}\cdots p_{i-1}^{m_{i-1}+\a_{i,i-1}}p_i^{m_i}p_{i+1}^{m_{i+1}+\a_{i,i+1}}\cdots p_k^{m_k+\a_{ik}},$$
then the order of $\bigcap_{i=1}^k MO_{p'_i}(G)$ divides the greatest common divisor $$\gcd(|M||O_{p'_1}(G)|,\cdots,|M||O_{p'_k}(G)|)=p_1^{m_1}\cdots p_k^{m_k}=|M|.$$

Thus $M=\bigcap_{i=1}^k MO_{p'_i}(G)$ because $\bigcap_{i=1}^k MO_{p'_i}(G)$ has the same order with its subgroup $M$.
\end{proof}

\begin{defn}[$p$-nilpotent]
Assume $p$ is a prime number, a finite group $G$ is said to be $p$-nilpotent if for each chief factor $M/N$ of $G$, either $M/N$ is a $p'$-group or $M/N$ is in the center of $G/N$. For a finite group $G$, we denote the product of all normal $p$-nilpotent subgroups of $G$  by $F_{p}(G)$. It is a characteristic subgroup of $G$.
\end{defn}

\begin{lem}[\cite{Guo}, Theorem 1.8.13 (2), page 41]\label{fpg}
Let $G$ be a finite group, then $F_p(G)/O_{p'}(G)=O_p(G/O_{p'}(G))$.
\end{lem}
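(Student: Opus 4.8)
The plan is to establish the two inclusions $F_p(G)/O_{p'}(G)\supseteq O_p(G/O_{p'}(G))$ and $F_p(G)/O_{p'}(G)\subseteq O_p(G/O_{p'}(G))$ separately. Throughout I write $K=O_{p'}(G)$, which is characteristic and hence normal in $G$, so that both sides of the claimed identity live in $G/K$. The whole argument rests on one preliminary equivalence: a finite group $H$ is $p$-nilpotent in the chief-factor sense of the definition above if and only if $H/O_{p'}(H)$ is a $p$-group (equivalently, $H$ has a normal $p$-complement). The easy direction is immediate---if $H/O_{p'}(H)$ is a $p$-group, then in a chief series of $H$ refined through $O_{p'}(H)$ the factors below $O_{p'}(H)$ are $p'$-groups, while those above are chief factors of the $p$-group $H/O_{p'}(H)$, which is nilpotent and therefore has every chief factor central in the relevant quotient $H/N$. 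I would prove the converse by induction on $|H|$: choosing a minimal normal subgroup $M$ (a chief factor, hence either a $p'$-group or central) one passes to $H/M$ and applies the inductive hypothesis, using $O_{p'}(H/M)=O_{p'}(H)/M$ when $M$ is a $p'$-group, and splitting the preimage of $O_{p'}(H/M)$ by Schur--Zassenhaus to extract the normal $p$-complement when $M$ is a central $p$-group.

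For the inclusion $O_p(G/K)\subseteq F_p(G)/K$, let $L/K=O_p(G/K)$, so $L\trianglelefteq G$ and $L/K$ is a $p$-group. Since $O_{p'}(L)$ is characteristic in $L$ and $L\trianglelefteq G$, it is normal in $G$ and hence contained in $K$; conversely $K\leq L$ is a normal $p'$-subgroup of $L$, so $O_{p'}(L)=K$ and $L/O_{p'}(L)=L/K$ is a $p$-group. By the easy direction of the equivalence, $L$ is $p$-nilpotent, and being a normal $p$-nilpotent subgroup of $G$ it satisfies $L\leq F_p(G)$; thus $O_p(G/K)=L/K\leq F_p(G)/K$.

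For the reverse inclusion it suffices to show $F_p(G)/K$ is a $p$-subgroup of $G/K$, for then it lies inside the largest such subgroup $O_p(G/K)$. Writing $F_p(G)=N_1\cdots N_r$ as a product of normal $p$-nilpotent subgroups, I would show each image $N_iK/K$ is a $p$-group. Indeed $N_i\cap K=O_{p'}(N_i)$: the inclusion $\supseteq$ holds because $O_{p'}(N_i)$ is characteristic in $N_i$, hence normal in $G$ and contained in $K$, while $\subseteq$ holds because $N_i\cap K$ is a normal $p'$-subgroup of $N_i$. Hence $N_iK/K\cong N_i/O_{p'}(N_i)$, which is a $p$-group by the hard direction of the equivalence applied to the $p$-nilpotent group $N_i$. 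Since $K\leq F_p(G)$, the quotient $F_p(G)/K$ is the product of the normal $p$-subgroups $N_iK/K$ of $G/K$, and a product of normal $p$-subgroups is again a $p$-group by a trivial order count.

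The main obstacle is the preliminary equivalence, and specifically its converse direction (chief-factor condition $\Rightarrow$ normal $p$-complement): this is where the induction and the Schur--Zassenhaus splitting in the central $p$-factor case are genuinely needed, and it is the only non-formal step. Everything else reduces to the characteristicity of $O_{p'}(-)$ under normal inclusions together with elementary order arguments. Alternatively, one may simply quote this equivalence as a standard fact about $p$-nilpotent groups, in which case the lemma follows directly from the two inclusions above.
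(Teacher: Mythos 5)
Your proof is correct. Note, however, that the paper does not prove this lemma at all --- it is quoted verbatim from Guo's book (Theorem 1.8.13(2)), so any self-contained argument is by definition a different route from the paper's. What your argument buys is a proof that works directly from the paper's own (chief-factor) definition of $p$-nilpotency: the crux, as you correctly identify, is the equivalence between ``every chief factor is a $p'$-group or central'' and ``$H/O_{p'}(H)$ is a $p$-group,'' and your induction (passing to $H/M$ for a minimal normal $M$, using $O_{p'}(H/M)=O_{p'}(H)/M$ in the $p'$-case and Schur--Zassenhaus plus centrality of $M$ to get a \emph{characteristic}, hence normal, $p$-complement in the central $p$-case) is the standard and correct way to do this; implicitly you also use the Jordan--H\"older theorem for chief series to ensure the chief-factor condition can be tested on a single convenient series. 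Once that equivalence is in hand, your two inclusions are clean: $O_p(G/O_{p'}(G))$ pulls back to a normal subgroup $L$ with $O_{p'}(L)=O_{p'}(G)$ and $L/O_{p'}(L)$ a $p$-group, hence $L\leq F_p(G)$; conversely each normal $p$-nilpotent $N_i$ satisfies $N_i\cap O_{p'}(G)=O_{p'}(N_i)$, so $N_iO_{p'}(G)/O_{p'}(G)$ is a normal $p$-subgroup and the product $F_p(G)/O_{p'}(G)$ is a $p$-group by the order count. Quoting the equivalence as a standard fact, as you suggest in your last sentence, would be entirely in the spirit of the paper, which simply cites the whole statement.
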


\begin{lem}[\cite{Guo}, Example 4, page 98]\label{abp}
Let $G$ be a finite group,  then $G$ is supersolvable if and only if $G/F_p(G)$ lies in the variety $\var{Ab}_{p-1}$ for each $p\in \pi(G)$.
\end{lem}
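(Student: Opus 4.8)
The plan is to prove the two implications separately, in both cases using Lemma \ref{fpg} to replace the quotient $G/F_p(G)$ by $\bar G/O_p(\bar G)$, where $\bar G=G/O_{p'}(G)$. This moves everything into a group whose $p'$-core is trivial and in which the $p$-nilpotent radical has become the ordinary $p$-core $Q:=O_p(\bar G)$; note also that Lemma \ref{fpg} already exhibits $O_{p'}(G)$ as a normal $p$-complement of $F_p(G)$, so $F_p(G)$ is itself $p$-nilpotent, a fact I will need repeatedly.

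For the forward implication, I would fix $p\in\pi(G)$ and use that $\bar G$ is supersolvable (a quotient of a supersolvable group), hence solvable, with $O_{p'}(\bar G)=1$. Then the Fitting subgroup equals $Q$, and $p\in\pi(G)$ forces $\bar G\neq 1$, so $Q\neq 1$ and the solvable self-centralizing property gives $C_{\bar G}(Q)\le Q$. Refining a chief series of $\bar G$ through $Q$ produces a chain $1=Q_0<\cdots<Q_m=Q$ of $\bar G$-chief factors, each cyclic of order $p$ by supersolvability, and $\bar G$ acts on each through $\aut(\Z/p\Z)\cong\Z/(p-1)\Z$. Hence the stabiliser $K=\bigcap_i C_{\bar G}(Q_i/Q_{i-1})$ is normal in $\bar G$ and $\bar G/K$ embeds in a product of copies of $\Z/(p-1)\Z$, so $\bar G/K\in\var{Ab}_{p-1}$. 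The key step is to show $K=Q$: the $p$-group $Q$ acts trivially on each order-$p$ factor by coprimality, so $Q\le K$; conversely a stability-group argument (Kaloujnine) shows $K$ acts on $Q$ as a $p$-group with kernel $C_{\bar G}(Q)\le Q$, so $K$ is a normal $p$-subgroup and $K\le O_p(\bar G)=Q$. Therefore $G/F_p(G)\cong\bar G/Q=\bar G/K\in\var{Ab}_{p-1}$.

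For the reverse implication I would argue by minimal counterexample. One first checks the hypothesis passes to quotients: the image of the $p$-nilpotent normal subgroup $F_p(G)$ lies in $F_p(G/N)$, so $(G/N)/F_p(G/N)$ is a quotient of $G/F_p(G)\in\var{Ab}_{p-1}$. Since $\var{Su}$ is a variety (closed under subgroups and direct products), a minimal counterexample has a unique minimal normal subgroup $N$, for otherwise $G$ embeds in a product of two supersolvable quotients. I would then rule out $N$ nonabelian: each $G/F_p(G)$ is abelian, so $G'\le F_p(G)$, and $N$ perfect gives $N\le F_p(G)$ for every $p\in\pi(N)$; as $F_p(G)$ is $p$-nilpotent, its normal subgroup $N$ and hence its simple direct factors would be $p$-nilpotent for every prime dividing their order, which is impossible for nonabelian simple groups. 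Thus $N$ is an elementary abelian $q$-group and $G$ is solvable. Now $O_{q'}(G)=1$, so $F_q(G)=O_q(G)\ge N$, and $N\le Z(O_q(G))$ gives $O_q(G)\le C_G(N)$; consequently $G/C_G(N)$ is a quotient of $G/F_q(G)\in\var{Ab}_{q-1}$ and acts on $N$ by commuting elements of order dividing $q-1$, whose eigenvalues are $(q-1)$-th roots of unity lying in $\F_q^{\times}$. Such a family is simultaneously diagonalisable over $\F_q$, so $N$ splits into $1$-dimensional $G$-submodules and irreducibility forces $|N|=q$; then $N$ is normal cyclic of prime order with $G/N$ supersolvable, making $G$ supersolvable, a contradiction.

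The main obstacle is the reverse direction, and within it the two structural facts: that a nonabelian minimal normal subgroup is incompatible with $p$-nilpotence of $F_p(G)$ for all relevant primes (this is what secretly forces solvability), and the diagonalisation step that collapses an abelian minimal normal subgroup to dimension one. I expect the latter to be the crux, since it is precisely the hypothesis that $G/F_p(G)$ has \emph{exponent dividing} $p-1$—rather than merely being abelian—that makes all eigenvalues rational over $\F_q$ and hence the module genuinely split.
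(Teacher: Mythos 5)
Your proof is correct. Note that the paper offers no argument of its own for this lemma---it is imported verbatim from Guo's book (Example 4, p.~98)---so there is nothing in the text to compare against; what you have supplied is a complete, self-contained derivation of a quoted result, and both halves check out. In the forward direction, the reduction to $\bar G=G/O_{p'}(G)$ via Lemma \ref{fpg}, the identification $F(\bar G)=O_p(\bar G)=Q$ together with $C_{\bar G}(Q)\le Q$ for solvable $\bar G$, and the stability-group argument pinning the kernel of the action on the order-$p$ chief factors of $Q$ down to exactly $Q$ constitute the standard proof that the supersolvable groups form the saturated formation locally defined by $p\mapsto \var{Ab}_{p-1}$. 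In the reverse direction each step is sound: the hypothesis passes to quotients because $F_p(G)N/N$ is a normal $p$-nilpotent subgroup of $G/N$; a nonabelian minimal normal subgroup is excluded because a nonabelian simple group cannot be $p$-nilpotent for any prime $p$ dividing its order; and for $N$ elementary abelian of exponent $q$, uniqueness of $N$ gives $O_{q'}(G)=1$, hence $F_q(G)=O_q(G)\le C_G(N)$, so $G/C_G(N)$ is abelian of exponent dividing $q-1$ and simultaneous diagonalisation over $\F_q$ collapses the irreducible module $N$ to order $q$. Your closing remark is also on point: it is the exponent bound $q-1$, not mere commutativity, that places all eigenvalues in $\F_q^{\times}$ and makes the module split.
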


\begin{lem}\label{fp}
Suppose $G$ is a finite supersolvable group, then $G/O_{p'}(G)$ lies in the variety $\var{H}_p$ for each $p\in \pi(G)$.
\end{lem}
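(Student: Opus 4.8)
The plan is to work inside the quotient $\bar G = G/O_{p'}(G)$ and to exhibit it directly as an extension of a $p$-group by a group in $\var{Ab}_{p-1}$, which is precisely what membership in $\var{H}_p = \var{G}_p * \var{Ab}_{p-1}$ demands. The natural candidate for the normal $p$-subgroup is the $p$-core $N = O_p(\bar G)$, which by definition is a normal $p$-subgroup of $\bar G$ and hence already lies in $\var{G}_p$; the whole content of the lemma is then to check that the corresponding quotient lands in $\var{Ab}_{p-1}$.

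First I would invoke Lemma \ref{fpg}, which identifies this $p$-core explicitly: it gives $F_p(G)/O_{p'}(G) = O_p\big(G/O_{p'}(G)\big) = N$. This is the pivotal step, since it lets me trade the a priori opaque subgroup $N$ of $\bar G$ for the characteristic subgroup $F_p(G)$ of $G$ itself, about which Lemma \ref{abp} has something to say. Next I would compute the quotient $\bar G/N$ via the third isomorphism theorem:
$$
\bar G / N = \big(G/O_{p'}(G)\big)\big/\big(F_p(G)/O_{p'}(G)\big) \cong G/F_p(G).
$$

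Finally I would apply Lemma \ref{abp}: since $G$ is supersolvable and $p \in \pi(G)$, the quotient $G/F_p(G)$ lies in $\var{Ab}_{p-1}$, hence so does $\bar G/N$. Combined with $N \in \var{G}_p$, this displays $\bar G = G/O_{p'}(G)$ as a group with a normal $p$-subgroup whose quotient is abelian of exponent dividing $p-1$, i.e. $\bar G \in \var{G}_p * \var{Ab}_{p-1} = \var{H}_p$, which is the claim.

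I do not anticipate any genuine obstacle: the two cited results, Lemma \ref{fpg} and Lemma \ref{abp}, carry all the group-theoretic weight, and the argument is essentially a matter of assembling them. The only care required is the bookkeeping---choosing $N$ to be the $p$-core of $\bar G$ rather than attempting to manipulate $O_{p'}(G)$ and $F_p(G)$ inside $G$ directly, and matching the final statement against the exact definition of the product variety $\var{H}_p$. It is precisely Lemma \ref{fpg} that makes the choice $N = O_p(\bar G)$ collapse the quotient $\bar G/N$ cleanly onto $G/F_p(G)$, so that Lemma \ref{abp} can be applied verbatim.
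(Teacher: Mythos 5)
Your proof is correct and follows essentially the same route as the paper: the paper likewise exhibits $G/O_{p'}(G)$ as an extension with kernel $F_p(G)/O_{p'}(G)$ (a $p$-group by Lemma \ref{fpg}) and quotient $G/F_p(G)$ (in $\var{Ab}_{p-1}$ by Lemma \ref{abp}). Your reformulation via $N=O_p(G/O_{p'}(G))$ and the third isomorphism theorem is just a slightly more explicit packaging of the same exact sequence.
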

\begin{proof}

We have an exact sequence of groups
$$1\To F_p(G)/O_{p'}(G)\To G/O_{p'}(G)\To G/F_p(G)\To 1.$$

$G/F_p(G)$ lies in the variety $\var{Ab}_{p-1}$ by Lemma \ref{abp} and $F_p(G)/O_{p'}(G)$ is a $p$-group by Lemma \ref{fpg}.
\end{proof}

\begin{thm}\label{mainthm}
Let $H$ be a subgroup of $F$, then $Cl_{\var{Su}}(H)=\bigcap_{p\in\mathbb{P}}Cl_{\var{H}_p}(H)$.
\end{thm}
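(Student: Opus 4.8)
The plan is to prove the two inclusions separately, the first being essentially formal and the second carrying the real content. For $Cl_{\var{Su}}(H)\subseteq\bigcap_{p\in\PP}Cl_{\var{H}_p}(H)$ I would simply recall that each $\var{H}_p=\var{G}_p*\var{Ab}_{p-1}$ is a subvariety of $\var{Su}$, so Lemma \ref{subd}(2) gives $Cl_{\var{Su}}(H)\subseteq Cl_{\var{H}_p}(H)$ for every prime $p$; intersecting over all $p$ yields this containment at once.

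The substantive direction is $\bigcap_{p\in\PP}Cl_{\var{H}_p}(H)\subseteq Cl_{\var{Su}}(H)$. Here I would use the description $Cl_{\var{Su}}(H)=\bigcap_{\phi\in\Phi_{\var{Su}}}\phi^{-1}(\phi(H))$ together with the reduction to surjective homomorphisms recorded just before Corollary \ref{dense}: it suffices to show, for every surjective homomorphism $\phi\colon F\to G$ onto a finite supersolvable group $G$, that a fixed element $w\in\bigcap_{p}Cl_{\var{H}_p}(H)$ satisfies $\phi(w)\in\phi(H)$. So I would fix such a $\phi$ and write $M=\phi(H)\leq G$.

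The heart of the argument is to localize at each prime dividing $|G|$. For each $p\in\pi(G)$, Lemma \ref{fp} guarantees that $G/O_{p'}(G)$ lies in $\var{H}_p$, so the composite $\phi_p\colon F\xrightarrow{\phi}G\to G/O_{p'}(G)$ belongs to $\Phi_{\var{H}_p}$. Since $w\in Cl_{\var{H}_p}(H)\subseteq\phi_p^{-1}(\phi_p(H))$, we obtain $\phi_p(w)\in\phi_p(H)$; pulling this back along the quotient map $G\to G/O_{p'}(G)$, whose kernel is exactly $O_{p'}(G)$, it reads $\phi(w)\in MO_{p'}(G)$. As this holds for every $p\in\pi(G)$, I would conclude $\phi(w)\in\bigcap_{p\in\pi(G)}MO_{p'}(G)$.

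Finally I would invoke Lemma \ref{fff}, which asserts precisely that $M=\bigcap_{p\in\pi(G)}MO_{p'}(G)$, to get $\phi(w)\in M=\phi(H)$; since $\phi$ was an arbitrary surjection onto a supersolvable group, this gives $w\in Cl_{\var{Su}}(H)$ and closes the second inclusion. The main obstacle — the reason the theorem is more than a formal manipulation — is the gluing step: the per-prime memberships $\phi(w)\in MO_{p'}(G)$ must be assembled into genuine membership $\phi(w)\in M$, and it is exactly the combination of Lemma \ref{fp} (supplying the $\var{H}_p$-quotients $G/O_{p'}(G)$) with Lemma \ref{fff} (forcing the intersection of the cosets to collapse back to $M$) that makes this possible. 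Everything else is bookkeeping with the closure formula and the reduction to surjective homomorphisms.
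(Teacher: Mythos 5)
Your proposal is correct and follows essentially the same route as the paper: the easy inclusion from $\var{H}_p\subset\var{Su}$, and the substantive inclusion by combining Lemma \ref{fp} (to obtain $\phi(w)\in\phi(H)O_{p'}(G)$ for each $p\in\pi(G)$ via the quotients $G/O_{p'}(G)\in\var{H}_p$) with Lemma \ref{fff} (to collapse $\bigcap_{p\in\pi(G)}\phi(H)O_{p'}(G)$ back to $\phi(H)$). The only cosmetic difference is that the paper phrases the second inclusion as a proof by contradiction while you argue directly.
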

\begin{proof}
$Cl_{\var{Su}}(H)\subset Cl_{\var{H}_p}(H) $ since $\var{H}_p$ is a subvariety of $\var{Su}$ for any $p\in \PP$.

If there exists an element $g\in \bigcap_{p\in\mathbb{P}}Cl_{\var{H}_p}(H)-Cl_{\var{Su}}(H)$, then $g$ satisfies both of the following two conditions:

(1) for any $p\in \PP$, $S\in \var{H}_p$  and any homomorphism $\psi: F\to S$, $\psi(g)\in \psi(H)$;

(2) there exist $G\in \var{Su}$ and a homomorphism $\phi: F\to G$ such that $\phi(g)\notin \phi(H)$.

Denote the quotient homomorphism $G\to G/O_{p'}(G)$ by $\tau_p$ for $p\in \pi(G)$. By Lemma \ref{fp},  $G/O_{p'}(G)$ lies in the variety  $\var{H}_p$, thus $\tau_p\circ\phi(g)\in \tau_p\circ\phi(H)$ by the condition (1) above.

We have $\phi(g)\in \tau_p^{-1}(\tau_p(\phi(H)))=\phi(H)O_{p'}(G)$ for each $p\in \pi(G)$, then $$\phi(g)\in \bigcap_{p\in \pi(G)} \phi(H)O_{p'}(G)=\phi(H)$$ by Lemma \ref{fff}, contrary to the condition (2) above.
\end{proof}

\begin{cor}\label{mainres}
Let $H$ be a finitely generated subgroup of $F$, then the $\var{Su}$-closure of $H$ is finitely generated.
\end{cor}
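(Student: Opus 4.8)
The plan is to deduce the corollary directly from Theorem \ref{mainthm} together with the finiteness of the $A$-fringe. First I would fix a basis $A$ of $F$, so that $F=F(A)$ and the notion of overgroup is available. Theorem \ref{mainthm} gives
$$Cl_{\var{Su}}(H)=\bigcap_{p\in\mathbb{P}}Cl_{\var{H}_p}(H),$$
which is a priori an infinite intersection indexed by all primes. The key reduction is to collapse this to a finite intersection.

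To do so, I would invoke Corollary \ref{hpclo}: for each prime $p$ the subgroup $Cl_{\var{H}_p}(H)$ is an overgroup of $H$, hence lies in the $A$-fringe $\mathcal{O}_A(H)$. Since $\mathcal{O}_A(H)$ is a finite set, the family $\{Cl_{\var{H}_p}(H)\}_{p\in\mathbb{P}}$ consists of only finitely many distinct subgroups, say $K_1,\dots,K_m$. Thus
$$Cl_{\var{Su}}(H)=\bigcap_{i=1}^m K_i,$$
a finite intersection. Moreover each $K_i$, being an overgroup of the finitely generated subgroup $H$, is itself finitely generated.

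Finally I would conclude by the Howson property of free groups: a finite intersection of finitely generated subgroups of a free group is again finitely generated. Applying this to $K_1,\dots,K_m$ shows that $Cl_{\var{Su}}(H)$ is finitely generated. There is essentially no serious obstacle here; the argument is a clean assembly of results already established. The only points deserving care are that the passage from an infinite to a finite intersection relies entirely on the finiteness of $\mathcal{O}_A(H)$ (and hence on having fixed the basis $A$), and that the final step genuinely needs Howson's theorem rather than mere closure of overgroups under intersection, since an intersection of overgroups need not itself be an overgroup.
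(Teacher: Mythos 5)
Your proposal is correct and follows essentially the same route as the paper's own proof: fix a basis $A$, use Corollary \ref{hpclo} to place every $Cl_{\var{H}_p}(H)$ in the finite set $\mathcal{O}_A(H)$, collapse the intersection in Theorem \ref{mainthm} to a finite one, and conclude by the Howson property. Your explicit remark that the final step needs Howson's theorem is a point the paper leaves implicit, but it is the same argument.
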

\begin{proof}
Fix a basis $A$ of $F$. By Corollary \ref{hpclo}, each $\var{H}_p$-closure of $H$ lies in the $A$-fringe $\mathcal{O}_A(H)$ of $H$ which is a finite set, then by Theorem \ref{mainthm}, the $\var{Su}$-closure of $H$ is the intersection of finitely many finitely generated subgroups, and so it is also finitely generated.
\end{proof}

\noindent\textbf{Acknowledgements.} The second named author acknowledges partial support from the National Natural Science Foundation of China (Grant No. 12271385).


\begin{thebibliography}{99}


\bibitem{AS} K. Auinger and B. Steinberg, Varieties of finite supersolvable groups with the M. Hall property, Math. Ann. 335 (2006), 853--877.

\bibitem{Guo} W. Guo, The Theory of Classes of Groups, Mathematics and its Applications, 505. Kluwer Academic Publishers Group, Dordrecht; Science Press Beijing, Beijing, 2000. xii+258 pp.

\bibitem{Hall49} M. Hall, Coset representation in free groups, Trans. Amer. Math. Soc. 67 (1949) 421–432.

\bibitem{Hall50} M. Hall, A topology for free groups and related groups, Ann. Math. 52 (1950) 127–139.

\bibitem{KM02} I. Kapovich and A. Miasnikov, Stallings Foldings and Subgroups of Free Groups, J. Algebra, 248, 2 (2002), 608–668.

\bibitem{MSW01}  S. Margolis, M. Sapir and P. Weil,
Closed subgroups in pro-V topologies and the extension problem for inverse automata, Internat. J. Algebra Comput. 11 (2001), no. 4, 405–445.

\bibitem{MVW01} A. Miasnikov, E. Ventura and P. Weil, Algebraic extensions in free groups, Geometric group theory, 225–253, Trends Math., Birkhäuser, Basel, 2007.

\bibitem{Neu67} H. Neumann, Varieties of groups. Springer-Verlag, New York, 1967.

\bibitem{RZ94}L. Ribes and P. Zalesskii, The pro-p topology of a free group and algorithmic problems in semigroups, Internat. J. Algebra Comput., 4 (1994), 359-374.

\bibitem{Ser77} J. Serre, Arbres, amalgames, $SL_2$, Ast\'erisque 46, Soc. Math. France, (1977). English translation: Trees, Springer Monographs in Mathematics, Springer, (2003).

\bibitem{Sta83} J. Stallings, Topology of ﬁnite graphs, Inventiones Math. 71 (1983), 551–565.

\bibitem{Weil98} P. Weil, Computing closures of finitely generated subgroups of the free group, Algorithmic problems in groups and semigroups (Lincoln, NE, 1998), 289–307, Trends Math., Birkhäuser Boston, Boston, MA, 2000.

\end{thebibliography}
\end{document}